\newtheorem{cor}{Corollary}[section]
\newtheorem{teo}[cor]{Theorem}
\newtheorem{prop}[cor]{Proposition}
\newtheorem{lemma}[cor]{Lemma}
\theoremstyle{definition}
\theoremstyle{remark}
\newtheorem{remark}[cor]{Remark}
\newtheorem*{remark*}{Remark}
\newcommand{\Pp}{\mathbb{P}}
\newcommand{\R}{\mathbb{R}}
\newcommand{\C}{\mathbb{C}}
\newcommand{\h}{\mathbb{H}}
\newcommand{\diag}{\mathrm{diag}}
\newcommand{\SL}{\mathrm{SL}}
\newcommand{\dPSL}{\mathbb{P}SL(2,\mathbb{R})\times \mathbb{P}SL(2,\mathbb{R})}
\newcommand{\PSL}{\mathbb{P}SL}
\newcommand{\Ima}{\mathrm{Im}}
\newcommand{\Isom}{\mathrm{Isom}}
\newcommand{\SO}{\mathrm{SO}}
\newcommand{\Id}{\mathrm{Id}}
\newcommand{\AdS}{\mathrm{AdS}}
\newcommand{\Imm}{\mathcal{I}\text{m}}
\newcommand{\Teich}{\mathpzc{T}}
\renewcommand{\Re}{\mathcal{R}\text{e}}
\newcommand{\hol}{\mathrm{hol}}
\DeclareMathAlphabet{\mathpzc}{OT1}{pzc}{m}{it}
\title[Regular GHM AdS structures]{Regular globally hyperbolic maximal \\ anti-de Sitter structures}
\author{Andrea Tamburelli}
\begin{document}

\begin{abstract}
Let $\Sigma$ be a connected, oriented surface with punctures and negative Euler characteristic. We introduce regular globally hyperbolic anti-de Sitter structures on $\Sigma \times \R$ and provide two parameterisations of their deformation space: as an enhanced product of two copies of the Fricke space of $\Sigma$ and as the bundle over the Teichm\"uller space of $\Sigma$ whose fibre consists of meromorphic quadratic differentials with poles of order at most $2$ at the punctures. 
\end{abstract}

\maketitle

\setcounter{tocdepth}{1}

\tableofcontents

\section*{Introduction}
Anti-de Sitter geometry is the Lorentzian analogue of hyperbolic geometry, being it the local model of Lorentzian manifolds of constant sectional curvature $-1$. After the pioneering work of Mess (\cite{Mess}), three-dimensional anti-de Sitter geometry has attracted the interest of the mathematical community due to its connections with Teichm\"uller theory (\cite{bsk_multiblack}, \cite{BonSchlGAFA2009}) and hyperbolic geometry (\cite{bonschlfixed}). A special class of anti-de Sitter manifolds, called globally hyperbolic maximal compact (GHMC), turned out to share many anologies with hyperbolic quasi-Fuchsian manifolds: they are topologically a product $S\times \R$, where $S$ is a closed, connected, oriented surface of genus at least two, and the deformation space $\mathcal{GH}(S)$ of such structures is parameterised by the product of two copies of the Teichm\"uller space of $S$. While the theory is well-developed and the geometry of these manifolds is well-understood when $S$ is closed (\cite{volumeAdS}, \cite{foliationCMC}, \cite{folKsurfaces}, \cite{entropy}), the analogous picture for surfaces with punctures is not well-known. \\

In this paper, we propose a definition of a class of globally hyperbolic maximal anti-de Sitter structures on the product $\Sigma \times \R$, where $\Sigma$ is a closed, connected, oriented surface with $k$ punctures and negative Euler characteristic, that we call \emph{regular}. 
Our construction is inspired by the parameterisation of $\mathcal{GH}(S)$ found by Krasnov and Schlenker (\cite{Schlenker-Krasnov}): they exploited the uniqueness of the maximal surface embedded in a GHMC anti-de Sitter manifold $M$ to find a homeomorphism between $\mathcal{GH}(S)$ and the cotangent bundle of the Teichm\"uller space of $S$. They associated to $M$ the conformal class of the induced metric and the holomorphic quadratic differential that determines the second fundamental form of the unique maximal surface embedded in $M$. Our idea consists in studying what happens when replacing holomorphic quadratic differentials with meromorphic quadratic differentials that have at most second order poles at the punctures. There are many reasons to consider second order poles singularities: they naturally appear in the compactification of the cotangent bundle of the moduli space of Riemann surfaces (\cite{Wolpert_WPcompletion}) and they are related to parabolic Higgs bundles with regular singularities as studied by Simpson (\cite{Simpson_irregular}). Moreover, in a companion paper (\cite{compactify_AdS}), we will show that regular GHM anti-de Sitter structures naturally appear as limits of GHMC manifolds along pinching sequences. 
\\

We first show the existence of a maximal surface starting from the aforementioned data:\\
\\
{\bf Theorem A.}{\it \ Given a conformal structure on $\Sigma$ and a meromorphic quadratic differential $q$ with at most second order poles at the punctures, there exists a unique (up to global isometries) conformal equivariant maximal embedding $\tilde{\sigma}:\tilde{\Sigma}\rightarrow \AdS_{3}$ into anti-de Sitter space whose second fundamental form is the real part of $q$.}\\
\\
\indent The embedding $\tilde{\sigma}$ comes together with a representation $\rho:\pi_{1}(\Sigma) \rightarrow \Isom(\AdS_{3})$ and the main bulge of the paper deals with understanding how the residue (i.e. the leading coefficient in the Laurent expansion of $q$ at the punctures) determines the representation, and with describing the maximal globally hyperbolic domain of discontinuity for the action of $\rho(\pi_{1}(\Sigma))$. Recall that, by identifying $\Isom(\AdS_{3})$ with $\dPSL$, $\rho$ is equivalent to a couple of representations $\rho_{l,r}:\pi_{1}(\Sigma) \rightarrow \PSL(2,\R)$. We prove the following:\\
\\
{\bf Theorem B.}{\it \ Let $R_{i}$ be the residues of the meromorphic quadratic differential $q$ on $\Sigma$ and let $\tilde{\sigma}$ be the conformal maximal embedding of Theorem A. Then $\rho_{l}$ and $\rho_{r}$ are holonomies of hyperbolic structures on $\Sigma$ and the behaviour along the peripheral curves $\gamma_{i}$ is determined as follows: 
\vspace{-0.6cm}
\begin{enumerate}[i)]
	\item if $\Re(R_{i})\neq 0$ and $\Imm(R_{i})\neq 0$ then both $\rho_{r}(\gamma_{i})$ and $\rho_{l}(\gamma_{i})$ are hyperbolic;
	\item if $\Re(R_{i})=0$ and $\Imm(R_{i})\neq 0$ then exactly one between $\rho_{r}(\gamma_{i})$ and $\rho_{l}(\gamma_{i})$ is parabolic and the other is hyperbolic;
	\item if $R_{i}=0$ then $\rho_{r}(\gamma_{i})$ and $\rho_{l}(\gamma_{i})$ are both parabolic.
\end{enumerate}
}

\indent Unlike the closed case, we point out that the same holonomy can be realised by different meromorphic quadratic differentials. More precisely, if we only change the sign of the real part of the residue, the representation $\rho$ does not change. What helps us distinguish these cases is the geometry of the boundary at infinity of the associated maximal surface. Recall that the boundary at infinity of anti-de Sitter space can be identified with $\R\Pp^{1} \times \R\Pp^{1}$ and the action of $\rho=(\rho_{l}, \rho_{r})$ extends naturally on each factor. \\
\\
{\bf Theorem C.}{\it \ The boundary at infinity of $\tilde{\sigma}(\tilde{\Sigma})$ is a locally achronal curve that contains the closure of the set of couples of attractive fixed points of $(\rho_{l},\rho_{r})$. This set is completed to a topological circle by inserting, in a $\rho$-equivariant way, a future-directed or a past-directed saw-tooth for each hyperbolic end depending on the sign of the real part of the corresponding residue.}\\
\\
Recall that a saw-tooth, as defined in \cite{folKsurfaces}, is a "vee" on the boundary at infinity of anti-de Sitter space formed by a segment belonging to the right-foliation $\{*\}\times \R\Pp^{1}$ concatenated with a segment belonging to the left-foliation $\R\Pp^{1}\times \{*\}$ (or viceversa).\\
\\
\indent The boundary at infinity of $\tilde{\sigma}(\tilde{\Sigma})$ determines then a domain of dependence on which $\rho(\pi_{1}(\Sigma))$ acts properly discontinuously and the quotient gives the desired regular globally hyperbolic anti-de Sitter manifold diffeomorohic to $\Sigma \times \R$. Notice that, since the holonomy representation does not determine alone the structure, new data must be introduced in order to obtain the analogue of Mess' parameterisation. \\
\\
{\bf Theorem D.}{\it \ The deformation space of regular globally hyperbolic anti-de Sitter structures on $\Sigma\times \R$ is parameterised by  $(k+2)$-uples $(h_{l},h_{r}, \epsilon_{1}, \dots \epsilon_{k})$ where $h_{l,r}$ are hyperbolic metrics on $\Sigma$ such that each end corresponds to a cusp or a geodesic boundary, and $\epsilon_{j}$ is a decoration on each puncture so that
\[
	\epsilon_{j}=\begin{cases}
		\pm 1 \ \ \ \text{if the puncture $p_{j}$ is a geodesic boundary for both $h_{r}$ and $h_{l}$} \\
	0 \ \ \ \ \text{otherwise}
		\end{cases}
\]}

\indent As an applicaton of this theory, we describe a class of minimal Lagrangian maps between hyperbolic surfaces with cusps and geodesic boundary. An orientation preseving diffeomorphism $m:(\Sigma, h) \rightarrow (\Sigma,h')$ between hyperbolic surfaces is minimal Lagrangian if there exists a Riemann surface $X$ and harmonic maps $f:X \rightarrow (S,h)$ and $f':X \rightarrow (S,h')$ with opposite Hopf differentials such that $m=f'\circ f^{-1}$. These are in one-to-one correspondence with $(\rho_{l}, \rho_{r})$-equivariant maximal surfaces in anti-de Sitter space via the Gauss map (\cite{Schlenker-Krasnov}, \cite{bon_schl}): the Riemann surface $X$ is determined by the conformal structure of the maximal surface $S$, $h$ and $h'$ are the hyperbolic metrics on $\Sigma$ with holonomy $\rho_{l}$ and $\rho_{r}$, respectively and the harmonic maps $f$ and $f'$ are the projections of the Gauss map $G:\tilde{S}\rightarrow \h^{2}\times \h^{2}$ onto the left and right factor. We thus deduce the following:\\
\\ 
{\bf Theorem E.}{\it \ Let $(\Sigma, h)$ (resp. $(\Sigma,h')$) be hyperbolic surfaces with $b$ (resp. $b'$) geodesic boundaries and $p$ (resp. $p'$) cusps. Then there exist $2^{\min(b,b')}$ minimal Lagrangian diffeomorphisms from $(\Sigma, h)$ to $(\Sigma, h')$ that do not extend to the boundaries.}

\subsection*{Outline of the paper}In Section \ref{sec:background} we recall well-known facts about anti-de Sitter geometry and meromorphic quadratic differentials on surfaces. In Section \ref{sec:existence_max} we prove the existence of an equivariant maximal embedding starting from the data of a conformal structure on $\Sigma$ and a meromorphic quadratic differential with poles of order at most two at the punctures. The associated holonomy representation is described in Section \ref{sec:holonomy}. Theorem C is proved in Section \ref{sec:boundary}. We then parameterise the deformation space of regular GHM anti-de Sitter structures in Section \ref{sec:AdS_regularGHM}. The connection with minimal Lagrangian maps is explained in Section \ref{sec:application}.

\section{Background material}\label{sec:background} We recall here some well-known facts about anti-de Sitter geometry and (meromorphic) quadratic differentials on Riemann surfaces that will be used in the sequel. Throughout the paper, we will denote with $\overline{\Sigma}$ a closed, connected, oriented surface and with $\Sigma=\overline{\Sigma}\setminus \{p_{1}, \dots, p_{k}\}$ a surface with a finite number of punctures. We will always assume that $\chi(\Sigma)<0$. Moreover, we will denote with $\mathcal{T}(\Sigma)$ the Teichm\"uller space of $\Sigma$, i.e. the space of marked complete hyperbolic structures of finite area on $\Sigma$ up to isotopy. 

\subsection{Anti-de Sitter geometry} Consider the vector space $\R^{4}$ endowed with a bilinear form of signature $(2,2)$
\[
	\langle x,y\rangle= x_{0}y_{0}+x_{1}y_{1}-x_{2}y_{2}-x_{3}y_{3} \ .
\]
We denote 
\[
	\widehat{\AdS}_{3}=\{ x \in \R^{4} \ | \ \langle x , x \rangle=-1 \} \ .
\]
It can be easily verified that $\widehat{\AdS}_{3}$ is diffeomorphic to a solid torus and the restriction of the bilinear form to the tangent space at each point endows $\widehat{\AdS}_{3}$ with a Lorentzian metric of constant sectional curvature $-1$. Anti-de Sitter space is then defined as
\[
	\AdS_{3}=\Pp(\{x \in \R^{4} \ | \ \langle x,x\rangle < 0\})\subset \R\Pp^{3} \ .
\]
The natural map $\pi:\widehat{\AdS}_{3} \rightarrow \AdS_{3}$ is a two-sheeted covering and we endow $\AdS_{3}$ with the induced Lorentzian structure. The isometry group of $\widehat{\AdS_{3}}$ that preserves the orientation and the time-orientation is $\SO_{0}(2,2)$, the connected component of the identity of the group of linear transformations that preserve the bilinear form of signature $(2,2)$. \\

The boundary at infinity of anti-de Sitter space is naturally identified with 
\[
	\partial_{\infty}\AdS_{3}=\Pp(\{ x \in \R^{4} \ | \ \langle x,x\rangle=0\}) \ .
\]
It coincides with the image of the Segre embedding $s:\R\Pp^{1}\times \R\Pp^{1} \rightarrow \R\Pp^{3}$, and thus, it is foliated by two families of projective lines, which we distinguish by calling $s(\R\Pp^{1} \times \{*\})$ the right-foliation and $s(\{*\}\times \R\Pp^{1})$ the left-foliation. The action of an isometry extends continuously to the boundary, and preserves the two foliations. Moreover, it acts on each line by a projective transformation, thus giving an identification between $\Pp\SO_{0}(2,2)$ and $\dPSL$. \\

The Lorentzian metric on $\AdS_{3}$ induces on $\partial_{\infty}\AdS_{3}$ a conformally flat Lorentzian structure. To see this, notice that the map 
\begin{align*}
	F:D \times S^{1} &\rightarrow \widehat{\AdS}_{3} \\
		(z,w) &\mapsto \left( \frac{2}{1-\|z\|^{2}}z, \frac{1+\|z\|^{2}}{1-\|z\|^{2}}w\right)
\end{align*}
is a diffeomorphism, hence $D\times S^{1}$ is a model for anti-de Sitter space if endowed with the pull-back metric
\[
	F^{*}g_{\AdS_{3}}=\frac{4}{(1-\|z\|^{2})^{2}}|dz|^{2}-\left(\frac{1+\|z\|^{2}}{1-\|z\|^{2}}\right)d\theta'^{2} \ .
\]
Therefore, by composing with the projection $\pi:\widehat{\AdS}_{3}\rightarrow \AdS_{3}$, we deduce that $\pi \circ F$ continuously extend to a homeomorphism
\begin{align*}
	\partial_{\infty}F:S^{1}\times S^{1} &\rightarrow \partial_{\infty}\AdS_{3} \\
				(z,w) &\mapsto (z,w)
\end{align*}
and in these coordinates the conformally flat Lorentzian structure is induced by the conformal class $c=[d\theta^{2}-d\theta'^{2}]$. Notice, in particular, that the light-cone at each point $p \in \partial_{\infty}\AdS_{3}$ is generated by the two lines in the left- and right- foliation passing through $p$. 

\subsection{Complete maximal surfaces in $\AdS_{3}$} \label{sec:maxAdS} Let $U\subset \h^{2}$ be a simply connected domain. We say that $\sigma:U \rightarrow \AdS_{3}$ is a space-like embedding if $\sigma$ is an embedding and the induced metric $I=\sigma^{*}g_{AdS}$ is Riemannian. The Fundamental Theorem of surfaces embedded in anti-de Sitter space ensures that such a space-like embedding is uniquely determined, up to post-composition by a global isometry of $\AdS_{3}$, by its induced metric $I$ and its shape operator $B:\sigma_{*}TU \rightarrow \sigma_{*}TU$, which satisfy
\[
	\begin{cases} 
		d^{\nabla}B=0 \ \ \ \ \ \ \ \ \ \ \  \ \ \ \ \ \ \ \ \ \ \ \ \ \ \ \text{(Codazzi equation)} \\
		K_{I}=-1-\det(B) \ \ \ \ \ \ \ \ \ \ \ \ \ \text{(Gauss equation)}
	\end{cases}
\]
where $\nabla$ is the Levi-Civita connection and $K_{I}$ is the curvature of the induced metric on $\sigma(U)$. \\

We say that $\sigma$ is a maximal embedding if $B$ is traceless. In this case, the Codazzi equation implies that the second fundamental form $II=I(B\cdot, \cdot)$ is the real part of a quadratic differential $q$, which is holomorphic for the complex structure compatible with the induced metric $I$, in the following sense. For every couple of vector fields $X$ and $Y$ on $\sigma(U)$, we have
\[
	\Re(q)(X,Y)=I(BX,Y) \ .
\]
In a local conformal coordinate $z$, we can write $q=f(z)dz^{2}$ with $f$ holomorphic and $I=e^{2u}|dz|^{2}$. Thus, $\Re(q)$ is the bilinear form that in the frame $\{\partial_{x}, \partial_{y}\}$ is represented by 
\[
	\Re(q)=\begin{pmatrix}
			\Re(f) & -\Imm(f) \\
			-\Imm(f) & -\Re(f)
		\end{pmatrix} \ ,
\]
and the shape operator can be recovered as $B=I^{-1}\Re(q)$.\\

If the induced metric is complete, the space-like condition implies that, identifying $\widehat{AdS}_{3}$ with $D\times S^{1}$ via $F$, the surface is the graph of a $2$-Lipschitz map (\cite[Proposition 3.1]{Tambu_poly}) and its boundary at infinity $\Gamma$ is a topological circle in $\partial_{\infty}\AdS_{3}$ (\cite[Corollary 3.3]{Tambu_poly}). We also have control on the causal geometry of the curve at infinity:

\begin{lemma}\label{lm:boundary} The boundary at infinity $\Gamma$ of a complete space-like surface in $\AdS_{3}$ is locally achronal. Moreover, if two points are causally related, then a light-like segment joining them is entirely contained in $\Gamma$. 
\end{lemma}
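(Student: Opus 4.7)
The plan is to exploit the graph structure afforded by the model $\widehat{\AdS}_{3}\cong D\times S^{1}$ and translate causality at infinity into a Lipschitz condition on the graph function. By \cite[Proposition 3.1]{Tambu_poly}, as already recalled above, the complete space-like surface lifts to the graph of a map $u:D\to S^{1}$ which is $2$-Lipschitz with respect to the hyperbolic metric on $D$. Writing out the space-like condition in the coordinates provided by $F$, the induced metric on the graph being positive definite is equivalent to the pointwise Euclidean bound
\[
|du(\dot z)|<\frac{2|\dot z|}{1+|z|^{2}}\,.
\]
Integrating this estimate along a circle $\{|z|=r\}$ yields
\[
|u(re^{i\theta_{2}})-u(re^{i\theta_{1}})|\leq \frac{2r}{1+r^{2}}\,|\theta_{2}-\theta_{1}|,
\]
and letting $r\to 1$ shows that the continuous boundary extension $\bar u:S^{1}\to S^{1}$ is $1$-Lipschitz with respect to arc-length on both circles.

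The curve $\Gamma$ is then the graph of $\bar u$ inside $\partial_{\infty}\AdS_{3}\cong S^{1}\times S^{1}$. In the conformal Lorentzian class $c=[d\theta^{2}-d\theta'^{2}]$ described in the background, a pair of nearby points is time-like related exactly when the difference of their second coordinates exceeds, in absolute value, the difference of their first. The $1$-Lipschitz bound for $\bar u$ rules this out, giving local achronality. For the second assertion, suppose $p_{1},p_{2}\in\Gamma$ are causally related. Local achronality forbids a time-like relation, so they must be light-like related, which translates into the equality $|\bar u(\theta_{2})-\bar u(\theta_{1})|=|\theta_{2}-\theta_{1}|$ in the corresponding parameters. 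Applying the $1$-Lipschitz bound to each of the two subintervals $[\theta_{1},\theta]$ and $[\theta,\theta_{2}]$ for arbitrary intermediate $\theta$ and summing forces equality in both, so that $\bar u$ is affine with slope $\pm 1$ on $[\theta_{1},\theta_{2}]$, and this affine graph is precisely a light-like segment of $\partial_{\infty}\AdS_{3}$ contained in $\Gamma$.

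The main delicate point is the passage from the interior estimate, which is naturally phrased with respect to the \emph{hyperbolic} metric on $D$, to the sharp $1$-Lipschitz bound on the boundary with respect to arc-length. This relies precisely on the asymptotic behaviour $2r/(1+r^{2})\to 1$ as $r\to 1$, so that the space-like condition degenerates exactly to the non-time-like condition at infinity rather than being strictly stronger or weaker; once this analytic bridge is in place, both statements follow directly from the elementary geometry of graphs of $1$-Lipschitz circle maps.
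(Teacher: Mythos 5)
Your proof is correct and follows essentially the same route as the paper: $\Gamma$ is the graph of a $1$-Lipschitz map $S^{1}\to S^{1}$, local achronality is immediate from the Lipschitz bound in the conformal class $[d\theta^{2}-d\theta'^{2}]$, and the equality case forces a unit-speed parameterisation of an arc, i.e.\ a light-like segment contained in $\Gamma$. The only difference is that you re-derive the $1$-Lipschitz boundary property from the space-like condition via the estimate $|du(\dot z)|<2|\dot z|/(1+|z|^{2})$ and the limit $2r/(1+r^{2})\to 1$ (correctly using the squared coefficient of $d\theta'^{2}$, which the paper's displayed formula misprints), whereas the paper simply quotes this graph property from \cite{Tambu_poly}.
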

\begin{proof} Using the model of anti-de Sitter space as product $D^{2}\times S^{1}$, we know that $\Gamma$ is the graph of a $1$-Lipshitz map $f:S^{1}\rightarrow S^{1}$. Therefore, for every $\theta_{1}, \theta_{2} \in S^{1}$ we have
\[
	d_{S^{1}}(f(\theta_{1}), f(\theta_{2})) \leq d_{S^{1}}(\theta_{1}, \theta_{2})
\]
with equality if and only if $f$ is a unit-speed parameterisation of the arc between $f(\theta_{1})$ and $f(\theta_{2})$. This already shows that $\Gamma$ is locally achronal. Suppose that $p,q \in \Gamma$ are causally related, then $q$ lies in the light-cone of $p=(\theta_{0}, \theta_{0}')$ 
\[
	L(p)=\{(\theta, \theta')\in S^{1}\times S^{1} \ | \ d_{S^{1}}(\theta_{0}, \theta)=d_{S^{1}}(\theta_{0}', \theta')\}
\]
and by the previous remark $f$ must be the unit speed parameterisation of the arc between $\theta_{0}'$ and $\theta'$. It is then straightforward to check that unit speed parameterisations of arcs are light-like segments in the boundary at infinity of $\AdS_{3}$.  
\end{proof}

\subsection{GHMC anti-de Sitter manifolds} This paper deals with the moduli space of a special class of manifolds locally isometric to $\AdS_{3}$. \\

We say that an anti-de Sitter three-manifold $M$ is Globally Hyperbolic Maximal (GHM) if it contains an embedded, oriented, space-like surface $S$ that intersects every inextensible non-space-like curve in exactly one point, and if $M$ is maximal by isometric embeddings. It turns out that $M$ is necessarily diffeomorphic to a product $S\times \R$ (\cite{MR0270697}). Moreover, we say that $M$ is Cauchy Compact (C) if $S$ is closed of genus at least $2$. We denote with $\mathcal{GH}(S)$ the deformation space of GHMC anti-de Sitter structures on $S\times \R$. \\

The theory is well-developed when $S$ is closed of genus at least $2$:
\begin{teo}[\cite{Mess}] $\mathcal{GH}(S)$ is parameterised by $\Teich(S)\times \Teich(S)$.
\end{teo}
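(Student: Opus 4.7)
The plan is to define a forward map $\mathcal{GH}(S) \to \Teich(S) \times \Teich(S)$ via holonomies, construct an explicit inverse via a domain of dependence construction, and check that the two are mutually inverse, throughout exploiting the identification $\Isom_0(\AdS_3) \cong \dPSL$ recalled above.

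For the forward direction, given $[M] \in \mathcal{GH}(S)$, I lift to the universal cover, develop isometrically into $\AdS_3$, and extract a holonomy $\rho = (\rho_l, \rho_r): \pi_1(S) \to \PSL(2,\R) \times \PSL(2,\R)$. To show each $\rho_{l,r}$ is Fuchsian, I consider the universal cover of a Cauchy surface as a complete spacelike surface embedded in $\AdS_3$; by Lemma \ref{lm:boundary} its boundary at infinity is a locally achronal topological circle $\Gamma \subset \partial_\infty \AdS_3 \cong \R\Pp^1 \times \R\Pp^1$. Local achronality combined with the graph structure of complete spacelike surfaces in the $D\times S^1$ model forces each coordinate projection $\pi_{l,r}: \Gamma \to \R\Pp^1$ to be weakly monotone and surjective; cocompactness of the $\pi_1(S)$-action rules out, via Lemma \ref{lm:boundary}, any maximal lightlike arc in $\Gamma$ and promotes each $\pi_{l,r}$ to a $\rho_{l,r}$-equivariant homeomorphism. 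Since no nontrivial element of $\pi_1(S)$ can then have a single boundary fixed point, each $\rho_{l,r}$ is purely hyperbolic, discrete and faithful, so $(\rho_l,\rho_r)$ defines a point of $\Teich(S)\times\Teich(S)$.

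For the inverse direction I start from $(\rho_l,\rho_r) \in \Teich(S)\times \Teich(S)$, let $f_{l,r}: S^1 \to \R\Pp^1$ be the equivariant boundary homeomorphisms, and form the $\rho$-invariant locally achronal topological circle $\Gamma = \{(f_l(x), f_r(x)) : x \in S^1\} \subset \partial_\infty\AdS_3$. Its invisible domain $D(\Gamma) \subset \AdS_3$, namely the set of points not causally related to any point of $\Gamma$, is an open convex $\rho$-invariant subset, and I claim that $\rho(\pi_1(S))$ acts freely and properly discontinuously on it with GHMC quotient. Properness is obtained by exhibiting a $\rho$-invariant spacelike Cauchy surface in $D(\Gamma)$ (for instance a boundary component of the convex hull of $\Gamma$, or a constant mean curvature leaf) whose projections through the left and right factor have cocompact image in $\h^2$, thanks to the homeomorphism property of $\pi_{l,r}$. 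Maximality is automatic: any isometric extension is contained in the domain of dependence of the boundary circle of any of its Cauchy surfaces, and this boundary circle must coincide with $\Gamma$.

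The main obstacle is the proper discontinuity of the action on $D(\Gamma)$, since $\AdS_3$ contains closed timelike geodesics and the invisible domain is not relatively compact in $\AdS_3$. Overcoming this requires using both Fuchsian actions simultaneously through the two projections of $\Gamma$, so as to propagate cocompactness of the Fuchsian actions on $\R\Pp^1$ through the causal structure of $\AdS_3$ into proper discontinuity on all of $D(\Gamma)$. Once this is established, tracking holonomy and boundary circle through both directions confirms that the two constructions are mutual inverses.
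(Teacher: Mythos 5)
The statement you are proving is quoted from Mess; the paper itself offers no proof, only the recollection of the correspondence (holonomy in one direction; graph of the conjugating homeomorphism plus domain of dependence in the other), and your outline follows exactly that route. The issue is that, as a proof, your write-up leaves genuine gaps at precisely the two places where all of the work in Mess's argument lies.

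First, in the forward direction, the inference ``no nontrivial element has a single boundary fixed point, hence each $\rho_{l,r}$ is purely hyperbolic, discrete and faithful'' is not valid. Elliptic elements of $\PSL(2,\R)$ have no fixed point on $\R\Pp^{1}$ at all, so your criterion does not exclude them, and even a representation all of whose nontrivial elements are hyperbolic can be indiscrete and non-faithful (for instance one factoring through $\Z$ with dense image in a hyperbolic one-parameter subgroup). Knowing that the projection $\pi_{l}$ conjugates the $\pi_{1}(S)$-action on $\Gamma$ to the $\rho_{l}$-action on $\R\Pp^{1}$ gives no dynamical control unless you first establish something about the action on $\Gamma$ itself, which you have not. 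Mess's actual argument is of a different nature: he shows the Euler number of $\rho_{l,r}$ is $\pm\chi(S)$ and invokes Goldman's theorem, and an alternative (in the spirit of the Krasnov--Schlenker picture recalled in Section \ref{sec:background}) is to produce hyperbolic metrics $I((E\pm JB)\cdot,(E\pm JB)\cdot)$ on a suitable closed Cauchy surface whose holonomies are exactly $\rho_{l}$ and $\rho_{r}$. Relatedly, your exclusion of lightlike arcs in $\Gamma$ ``by cocompactness via Lemma \ref{lm:boundary}'' is asserted rather than argued: that lemma only says that causally related boundary points are joined by a null segment contained in $\Gamma$; it does not rule such segments out. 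Second, in the inverse direction you correctly identify proper discontinuity of $\rho(\pi_{1}(S))$ on the invisible domain as the main obstacle, but the proposed remedy (exhibit an invariant Cauchy surface, e.g.\ a convex-hull boundary component or a CMC leaf, with cocompact action) is precisely the nontrivial content and is left unproven; as written it is close to circular, since cocompactness of the action on such a surface is essentially what proper discontinuity is supposed to deliver. So the skeleton matches the construction the paper recalls, but the two pillars --- Fuchsianity of the two factors, and properness of the action on the domain of dependence --- are not actually established in your proposal.
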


The homeomorphism is constructed as follows. Given a GHMC anti-de Sitter structure, its holonomy representation $\rho:\pi_{1}(S) \rightarrow \Isom(\AdS_{3})\cong \dPSL$ induces a couple of representations $(\rho_{l}, \rho_{r})$ by projecting onto each factor. Mess proved that both are faithful and descrete and thus define two points in $\Teich(S)$. On the other hand, given a couple of Fuchsian representations $(\rho_{l},\rho_{r})$, there exists a unique homeomorphism $\phi: \R\Pp^{1}\rightarrow \R\Pp^{1}$ such that $\rho_{r}(\gamma)\circ \phi=\phi\circ \rho_{l}(\gamma)$ for every $\gamma \in \pi_{1}(S)$. The graph of $\phi$ defines a curve $\Lambda_{\rho}$ on the boundary at infinity of $\AdS_{3}$ and Mess constructed a maximal domain of discontinuity $\mathcal{D}(\Lambda_{\rho})$ for the action of $\rho(\pi_{1}(S))$, called domain of dependence, by considering the set of points whose (projective) dual space-like plane is disjoint from $\Lambda_{\rho}$. The quotient
\[
	M=\mathcal{D}(\Lambda_{\rho})/\rho(\pi_{1}(S))
\]
is the desired GHMC anti-de Sitter manifold. 

\begin{remark}Notice that $\rho_{r,l}$ being holonomies of complete hyperbolic structures is necessary for the uniqueness of the homeomorphism $\phi$. In Section \ref{sec:AdS_regularGHM}, we will define \emph{regular} GHM anti-de Sitter structures on $\Sigma \times \R$ and provide a similar parameterisation of their deformation space.
\end{remark}

Mess introduces also the notion of convex core. This is the smallest convex subset of a GHMC anti-de Sitter manifold $M$ onto which $M$ retracts. It can be concretely realised as follows. If $\rho$ denotes the holonomy representation of $M$ and $\Lambda_{\rho}\subset \partial_{\infty}\AdS_{3}$ is the limit set of the action of $\rho(\pi_{1}(S))$, the convex core is
\[
	\mathcal{C}(M)=\mathcal{C}(\Lambda_{\rho})/\rho(\pi_{1}(S)) \ ,
\]
where $\mathcal{C}(\Lambda_{\rho})$ denotes the convex-hull of the curve $\Lambda_{\rho}$. If $M$ is Fuchsian (i.e. the left and right representations coincide), the convex core is a totally geodesic surface. Otherwise, it is a three-dimensional domain, homeomorphic to $S\times I$, the two boundary components being space-like surfaces, endowed with a hyperbolic metric and pleated along measured laminations.\\

Later Krasnov and Schlenker (\cite{Schlenker-Krasnov}) introduced another parameterisation of $\mathcal{GH}(S)$ by the cotangent bundle over $\Teich(S)$, which is what inspired our construction. Let us recall it briefly here. Let $M$ be a GHMC anti-de Sitter manifold. It is well-known that $M$ contains a unique embedded maximal surface $S$ (\cite{foliationCMC}). Lifting $S$ to $\AdS_{3}$, we obtain an equivariant maximal embedding of $\h^{2}$ into $\AdS_{3}$, which is completely determined (up to global isometries of $\AdS_{3}$) by its induced metric and a holomorphic quadratic differential. By equivariance, these define a Riemannian metric $I$ and a holomorphic quadratic differential $q$ on $S$. We can thus define a map
\begin{align*}
	\Psi: \mathcal{GH}(S) &\rightarrow T^{*}\Teich(S) \\
			M &\mapsto (h,q)
\end{align*}
associating to a GHMC anti-de Sitter structure the unique hyperbolic metric in the conformal class of $I$ and the holomorphic quadratic differential $q$.\\

In order to prove that $\Psi$ is a homeomorphism, Krasnov and Schlenker (\cite{Schlenker-Krasnov}) found an explicit inverse. They showed that, given a hyperbolic metric $h$ and a quadratic differential $q$ that is holomorphic for the complex structure compatible with $h$, it is always possible to find a smooth map $v:S\rightarrow \R$ such that $I=2e^{2v}h$ and $B=I^{-1}\Re(2q)$ are the induced metric and the shape operator of a maximal surface embedded in a GHMC anti-de Sitter manifold. This is accomplished by noticing that the Codazzi equation for $B$ is trivially satisfied since $q$ is holomorphic, and thus it is sufficient to find $v$ so that the Gauss equation holds. Now,
\[
	\det(B)=\det(e^{-2v}(2h)^{-1}\Re(q))=e^{-4v}\det((2h)^{-1}\Re(2q))=-e^{-4v}\|q\|_{h}^{2}
\]
and
\[
	K_{I}=e^{-2v}(K_{2h}-\Delta_{2h}v)=\frac{1}{2}e^{-2v}(K_{h}-\Delta_{h}v)
\]
hence the Gauss equation translates into the quasi-linear PDE
\begin{equation}\label{eq:PDE}
	\frac{1}{2}\Delta_{h}v=e^{2v}-e^{-2v}\|q\|_{h}^{2}+\frac{1}{2}K_{h} \ .
\end{equation}

They proved existence and uniqueness of the solution to Equation (\ref{eq:PDE}) on closed surfaces and on surfaces with punctures, when $q$ has pole sigularities of order at most $1$ at the punctures. In Section \ref{sec:existence_max}, we will extend this result for meromorphic quadratic differentials on $\Sigma$ with poles of order at most $2$ at the punctures and describe the geometry of the associated maximal surface.

\subsubsection{Relation between the two parameterisations}\label{sec:relpara} The theory of harmonic maps between hyperbolic surfaces provides a bridge between the two parameterisations of $\mathcal{GH}(S)$. Let $M$ be a GHMC anti-de Sitter manifold with holonomy $\rho=(\rho_{l}, \rho_{r})$ and let $S$ be the unique maximal surface embedded in $M$. Lifting to the universal cover, the Gauss map $G:\tilde{S} \rightarrow \h^{2}\times \h^{2}$ provides a couple of $(\rho_{r}, \rho_{l})$-equivariant harmonic maps with Hopf differentials $\pm iq$, where $\Re(q)$ is the second fundamental form of $S$ (\cite{Schlenker-Krasnov}, \cite{TambuCMC}). Denoting with $\pi_{l}$ and $\pi_{r}$ the projections onto the left and right factor, the metrics
\[
	(G\circ\pi_{l})^{*}g_{\h^{2}}  \ \ \ \text{and} \ \ \ (G\circ \pi_{r})^{*}g_{\h^{2}}
\]
descend to hyperbolic metrics $h_{l}$ and $h_{r}$ on $S$ with holonomy $\rho_{l}$ and $\rho_{r}$, respectively. 

\begin{remark}The same picture holds for GHMC anti-de Sitter manifolds with particles: their deformation space is parameterised by a couple of hyperbolic metrics on $\Sigma$ with cone singularities of angle $\theta$ less than $\pi$ at the punctures, or equivalently by the vector bundle over $\Teich_{\theta}(\Sigma)$ of meromorphic quadratic differentials on $\Sigma$ with at most simple poles at the punctures. See also \cite{Tambu_Qiyu}, \cite{toulisse}.
\end{remark}

\subsection{Meromorphic quadratic differentials}
Suppose that $\Sigma$ is endowed with a complex structure. A meromorphic quadratic differential $q$ on $\Sigma$ is a $(2,0)$-tensor, locally of the form $q(z)dz^{2}$, where $q(z)$ is a meromorphic function with poles at the punctures $\{p_{1}, \dots, p_{k}\}$. In this paper, we are interested in meromorphic quadratic differentials with poles of order at most $2$ at the punctures, which we call \emph{regular}. This means that in a local coordinate around the puncture we can write
\[
	q(z)dz^{2}= \frac{R}{z^{2}}dz^{2}(1+O(z))
\]
for some $R \in \C$, called the residue of the quadratic differential (\cite[Chapter III]{Strebel}). Notice that $R$ is indipendent from the choice of the local coordinate. 

\begin{remark}The terminology \emph{regular} is not standard, and it is borrowed from the theory of parabolic Higgs bundles. Namely, $\dPSL$-parabolic Higgs bundles over $\Sigma$ associated to a meromorphic quadratic differential with poles of order at most $2$ at the punctures produce \emph{regular} singularities, as defined in \cite{Simpson_irregular}.
\end{remark}

By Riemann-Roch, the complex vector space of meromorphic quadratic differentials with poles at $\{p_{1}, \dots, p_{k}\}$ of order at most $2$ has real dimension $3|\chi(\overline{\Sigma})|+4k$. \\

A quadratic differential $q$ induces a singular flat metric $|q|$, that in local coordinates is written as $|q|=|q(z)||dz|^{2}$. The metric has cone singularities of angle $\pi(m+2)$ at the zeros of order $m$ of $q$, and of angle $\pi$ at a first order pole. Instead, the metric is complete in a neighbourhood of a second order pole.\\

\section{Finding a maximal surface}\label{sec:existence_max}
In the next three sections we are going to construct globally hyperbolic anti-de Sitter structures on $\Sigma \times \R$ starting from the data of the conformal structure induced by a complete hyperbolic metric $h$ on $\Sigma$ of finite area and a regular meromorphic quadratic differential $q$. We first find a complete equivariant maximal embedding of $\widetilde{\Sigma}$ into $\AdS_{3}$ with induced metric $I=2e^{2v}h$ and second fundamental form $II=\Re(2q)$. We then describe its boundary at infinity and prove that $\pi_{1}(\Sigma)$ acts by isometries and properly discontinously on its domain of dependence, thus inducing a globally hyperbolic anti-de Sitter structure on the quotient $\Sigma \times \R$. Moreover, we show how to determine the holonomy along peripheral curves in terms of the residue at the poles.\\

Let $h\in \mathcal{T}(\Sigma)$ be a complete hyperbolic metric of finite area on $\Sigma$ and let $q$ be a meromorphic quadratic differential with poles of order $2$ at the punctures $\{p_{j}\}_{j=1}^{k}$ with complex residues $\{R_{j}\}_{j=1}^{k}$ respectively. Recall that finding an equivariant maximal conformal embedding of $\widetilde{\Sigma}$ into $\AdS_{3}$ is equivalent to finding a solution of the quasi-linear PDE (Section \ref{sec:maxAdS})
\begin{equation}\label{eq:PDE2}
	\frac{1}{2}\Delta_{h}=e^{2v}-e^{-2v}\|q\|^{2}_{h}+\frac{1}{2}K_{h} \ .
\end{equation}
Notice that this equation is invariant under a conformal change of the metric $h$, in the sense that if $g$ is in the same conformal class as $h$ and $2e^{2v}h=2e^{2u}g$, then $u$ satisfies the differential equation
\begin{equation}\label{eq:PDE3}
	\frac{1}{2}\Delta_{g}=e^{2u}-e^{-2u}\|q\|^{2}_{g}+\frac{1}{2}K_{g} \ .
\end{equation}
Thus, we first want to choose an underlying complete metric $g$ conformal to $h$ such that
\[
	1-\|q\|_{g}^{2}+\frac{1}{2}K_{g} \to 0  \ \ \ \text{at $p_{i}$} \ ,
\]
so that  $u=0$ is an approximate solution to Equation (\ref{eq:PDE3}) near the punctures. We first describe how to choose the metric $g$ in a neighbourhood of the punctures. To this aim we distinguish two cases:
\begin{enumerate}[a)]
	\item if at a puncture $p_{i}$ the complex residue is $R_{i}=0$, we consider the metric $\frac{1}{2}h$. In fact, in a neighbourhood $U_{i}$ of $p_{i}$, we can find a local coordinate $z_{i}$ so that
\[
	h=\frac{4}{|z_{i}|^{2}(\log(|z_{i}|^{2}))^{2}}|dz_{i}|^{2}
\]
and it can be easily verified that $\|q\|_{g}^{2}=o(1)$ for $z_{i} \to 0$;
	\item otherwise we choose, on a neighbourhood $U_{i}$ of the puncture $p_{i}$, the flat metric 
\[
	g=\frac{|R_{i}|}{|z_{i}|^{2}}|dz_{i}|^{2}
\]
induced by the leading term of the quadratic differential. In this case, we will have $\|q\|_{g}^{2}\propto 1$ on $U_{i}$.
\end{enumerate}
We then define $g$ on all $\Sigma$ by smoothly interpolating on annular neighbourhoods of $U_{i}$ between the metric $\frac{1}{2}h$ and the metrics described above. More precisely, around a puncture $p_{i}$ where the complex residue $R_{i}$ does not vanish, we can find a complex coordinate $z_{i}$ and radii $c_{i}<C_{i}$ so that
\begin{equation}\label{eq:metric_coord}
	g_{|_{U_{i}}}=\begin{cases}
		\frac{|R|}{|z_{i}|^{2}}|dz_{i}|^{2} \ \ \ \ \ \ \ \ \ \ \ \ \ \ \ \ \ \ \ \ \ \ \text{for $|z_{i}|<c_{i}$} \\
		e^{v_{i}}|dz_{i}|^{2} \ \ \ \ \ \ \  \ \ \ \ \ \ \ \ \ \ \ \ \ \ \ \ \text{for $c_{i}\leq |z_{i}|\leq C_{i}$}\\
		\frac{2}{|z_{i}|^{2}(\log(|z_{i}|^{2}))^{2}}|dz_{i}|^{2} \ \ \ \ \ \ \ \ \ \text{for $|z_{i}|>C_{i}$}
		\end{cases} 
\end{equation}
for some smooth interpolating functions $v_{i}$. Moreover, we can require that all the zeros of $q$ be outside $U_{i}$ and that there exist $\delta_{i}>0$ such that $\|q\|_{g}^{2}\geq \delta_{i}$ on $U_{i}$. This is possible because $\|q(z_{i})\|_{g}^{2}\to 1$ when $z_{i}\to 0$.\\

We can now find a solution to Equation (\ref{eq:PDE3}) using the method of barriers.
\begin{prop}\label{prop:existence_sol}There exists a bounded smooth function $u:\Sigma \rightarrow \R$ satisfying
\begin{equation}\label{eq:PDEproof}
	\frac{1}{2}\Delta_{g}u=e^{2u}-e^{-2u}\|q\|_{g}^{2}+\frac{1}{2}K_{g} \ .
\end{equation}
\end{prop}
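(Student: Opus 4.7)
The nonlinearity $G(u,x)=e^{2u}-e^{-2u}\|q\|_g^2+\frac{1}{2}K_g$ is strictly increasing in $u$, since $\partial_u G=2e^{2u}+2e^{-2u}\|q\|_g^2>0$. Combined with the ellipticity of the Laplacian, this monotonicity puts us in the standard setting of the method of sub- and super-solutions applied to an exhaustion of $\Sigma$ by relatively compact domains.

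The first observation I would make is that, by the very choice of the background metric $g$, both $\|q\|_g^2$ and $K_g$ are bounded on $\Sigma$. On a cusp-like region at a case (a) puncture and on the complement of the neighbourhoods $U_i$, one has $g=\frac{1}{2}h$, so $K_g\equiv -2$ and $\|q\|_g^2\to 0$; on the flat region at a case (b) puncture, one has $K_g\equiv 0$ and $\|q\|_g^2\to 1$; on the annular interpolation regions, everything is controlled by continuity. The decisive point, arranged by construction, is that all zeros of $q$ lie in the region where $g=\frac{1}{2}h$, and in particular where $K_g=-2<0$.

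Using these bounds I would then produce constant barriers. For $A>0$ sufficiently large, $u_+\equiv A$ is a super-solution, since $e^{2A}+\frac{1}{2}K_g\geq e^{-2A}\|q\|_g^2$ holds pointwise once $e^{2A}$ dominates the bounds on $\|q\|_g^2$ and $|K_g|$. For $B>0$ sufficiently large, $u_-\equiv -B$ is a sub-solution: the inequality $e^{-2B}+\frac{1}{2}K_g\leq e^{2B}\|q\|_g^2$ is clear where $\|q\|_g^2$ is bounded away from zero (in particular on each case (b) neighbourhood by the uniform lower bound $\delta_i$), and at a zero of $q$ it reduces to $e^{-2B}\leq 1$ thanks to $K_g=-2$ there. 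Once the barriers are in place, I would exhaust $\Sigma=\bigcup_n\Omega_n$ by relatively compact subdomains with smooth boundary, solve the Dirichlet problem on each $\Omega_n$ with boundary value $0$ to obtain a smooth $u_n$ with $-B\leq u_n\leq A$ by classical sub/super-solution theory on bounded domains, and extract a $C^k_{\mathrm{loc}}$-limit $u$ using the uniform $L^{\infty}$ bound and interior Schauder estimates. The limit solves equation (\ref{eq:PDEproof}) on all of $\Sigma$ and is bounded.

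The main obstacle is the construction of the sub-solution: because $\|q\|_g^2$ vanishes at each zero of $q$, a very negative constant is far from being a sub-solution there unless $K_g$ is strictly negative at that point. This is exactly why the background metric is taken to be $\frac{1}{2}h$ (of constant curvature $-2$) outside the distinguished puncture neighbourhoods, and why the interpolation is arranged so that the finitely many zeros of $q$ all lie inside this region. Without this careful choice, one would be forced to use a more delicate non-constant sub-solution, locally modelled on the leading behaviour of $q$ near its zeros.
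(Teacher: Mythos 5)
Your argument is correct for the statement as written, and it is still the method of sub- and super-solutions, but your barriers are genuinely different from the paper's: you use constants $A$ and $-B$ (legitimate, since for the interpolated metric $g$ both $\|q\|_{g}^{2}$ and $K_{g}$ are globally bounded, the zeros of $q$ lie in the region where $K_{g}=-2$, and $\|q\|_{g}^{2}\geq \delta_{i}$ on each $U_{i}$), and you replace the citation of Wan's existence theorem by a self-contained exhaustion with Dirichlet problems, monotone iteration and interior Schauder estimates; the paper instead builds the non-constant barriers $\pm\beta_{i}|z_{i}|^{2\alpha_{i}}$ near the second-order poles and quotes the existence result directly. One small repair to your subsolution check: the dichotomy should not be ``$\|q\|_{g}^{2}$ bounded away from zero'' versus ``zeros of $q$'', because near the punctures with vanishing residue $\|q\|_{g}^{2}$ tends to $0$ without vanishing; the clean split is ``outside $\bigcup_{i}U_{i}$, where $g=\tfrac{1}{2}h$ and $K_{g}=-2$, so $e^{-2B}-1\leq 0$ for every $B\geq 0$ regardless of $\|q\|_{g}$'' versus ``on $U_{i}$, where $\|q\|_{g}^{2}\geq\delta_{i}$ and $K_{g}$ is bounded, so $B$ large suffices''. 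The more substantive difference is what each choice of barriers buys: the paper's decaying barriers force $u\to 0$ at the punctures with non-vanishing residue at the rate $|z_{i}|^{2\alpha_{i}}=e^{-2\alpha_{i}y}$, and this decay (not mere boundedness) is what is used later, namely the $O(e^{-2\alpha y})$ error terms in Section \ref{sec:boundary} and the limit $e^{-4u}\|q\|_{g}^{2}\to 1$ in the proof of Theorem \ref{cor:riassunto}. So your proof establishes Proposition \ref{prop:existence_sol} as stated, more elementarily, but if it is to replace the paper's proof you should afterwards compare the solution with the decaying sub- and supersolutions (via a maximum-principle argument as in Proposition \ref{prop:uniqueness}) to recover the asymptotics that the rest of the paper relies on.
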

\begin{proof} Let $F(u,x)=e^{2u}-e^{-2u}\|q\|_{g}^{2}+\frac{1}{2}K_{g}$. Since $F$ is an increasing function of $u$, the existence of a solution to Equation (\ref{eq:PDEproof}) is guaranteed (\cite[Theorem 9]{wan}) by the existence of two continuous functions $u^{\pm}:\Sigma \rightarrow \R$ such that 
\[
	\Delta u^{+}\leq F(u^{+},x) \ , \ \ \ \Delta u^{-}\geq F(u^{-},x) \ \ \ \text{and} \ \ \ u^{-}\leq u^{+} \ .
\]
Let us start with the supersolution $u^{+}$. We consider a function $f:\Sigma \rightarrow \R$ with the following properties:
\begin{itemize}
	\item $f(z_{i})=|z_{i}|^{2\alpha_{i}}$ on the neighbourhood $\{|z_{i}|<c_{i}\}$ (see (\ref{eq:metric_coord})) of the puncture $p_{i}$ with non-vanishing complex residue;
	\item $f$ is a positive constant on a neighbourhood of the punctures with zero residue;
	\item $f$ is smooth and positive everywhere on $\Sigma$.
\end{itemize}
We then define $u^{+}=\beta f$ for some $\beta \in \R$. We claim that it is possible to choose $\beta>0$ sufficiently large and $\alpha_{i}>0$ small enough so that $u^{+}$ is a supersolution. It is clear that $u^{+}$ is a supersolution for every choice of $\beta$ sufficiently large on the neighbourhoods of the punctures with vanishing residue, because $f$ is constant. For the other cases, on the balls $\{|z_{i}|<c_{i}\}\subset U_{i}$ we compute
\begin{align*}
	&\frac{1}{2}\Delta_{g}(\beta|z_{i}|^{2\alpha_{i}})-e^{2\beta|z_{i}|^{2\alpha_{i}}}+e^{-2\beta|z_{i}|^{2\alpha_{i}}}\|q\|_{g}^{2}+\frac{1}{2}K_{g}\\
	=&\frac{1}{2}\beta\alpha_{i}^{2}\frac{|z_{i}|^{2\alpha_{i}}}{|R_{i}|}-e^{2\beta|z_{i}|^{2\alpha_{i}}}+e^{-2\beta|z_{i}|^{2\alpha_{i}}}(1+O(|z|))\\
	=&\left(\frac{\alpha_{i}^{2}}{2|R_{i}|}-2\right)u^{+}+(e^{-2u^{+}}-e^{2u^{+}}+2u^{+})+e^{-2\beta|z_{i}|^{2\alpha_{i}}}(1+O(|z|))
\end{align*}
and we notice that the term in the middle is always non-positive and we can choose $\alpha_{i}$ small enough and $\beta$ large enough so that the sum of the first and last term is negative. Therefore $u^{+}$ is a supersolution on $U_{i}$ for every $\alpha_{i}<\alpha_{0}$ and $\beta>\beta_{0}$. Outside these balls, we do not have control on the curvature of $g$ and on the Laplacian of $f$, but knowing that they are bounded, we can increase $\beta$ so that
\[
	\frac{\beta}{2} \Delta_{g}f-e^{\beta f}+e^{-2\beta f }\|q\|_{g}^{2}-\frac{1}{2}K_{g} \leq 0
\]
because $e^{\beta f}$ grows the fastest when $\beta \to +\infty$. This proves that $u^{+}$ is a supersolution everywhere on $\Sigma$.\\
\indent As for the subsolution, let us first consider a neighbourhood of the puncture $p_{i}$ with residue $R_{i}\neq 0$. On the ball $\{|z_{i}|<c_{i}\}$ a similar computation as above shows that it is possible to choose $\alpha_{i}>0$ small so that $w(z_{i})=-\beta_{i} |z_{i}|^{2\alpha_{i}}$ is a subsolution. Namely, 
\begin{align*}
	&\frac{1}{2}\Delta_{g}(w)-e^{2w}+e^{-2w}\|q\|_{g}^{2}-\frac{1}{2}K_{g}\\
	=&-\frac{1}{2}\beta_{i}\alpha_{i}^{2}\frac{|z_{i}|^{2\alpha_{i}}}{|R_{i}|}-e^{-2\beta_{i}|z_{i}|^{2\alpha_{i}}}+e^{2\beta_{i}|z_{i}|^{2\alpha_{i}}}(1+O(|z|))\\
	=&\left(\frac{\alpha_{i}^{2}}{2|R_{i}|}-2\right)w+(e^{-2w}-e^{2w}+2w)+e^{-2w}(1+O(|z|))
\end{align*}
is the sum of three positive terms for every $\beta_{i}>0$ and for $\alpha_{i}$ sufficiently small. On the annuli $\{c_{i}\leq |z_{i}| \leq C_{i}\}$, we do not have control on the curvature of $g$ (see (\ref{eq:metric_coord})), but we know that there exists $\delta_{i}>0$ so that $\|q\|_{g}^{2}\geq \delta_{i}$. Therefore, we can choose $\beta_{i}>0$ large enough so that the term $e^{-2w}\|q\|_{g}^{2}$ becomes dominant. In this way $w$ is a subsolution on the bigger balls $\{|z_{i}|\leq C_{i}\}$ and we can suppose that it takes a fixed value $-B<0$ on the boundary of each of those balls. We then define a function on the entire $\Sigma$ by putting
\[
	u^{-}=\begin{cases}
		-\beta_{i}|z_{i}|^{2\alpha_{i}} \ \ \ \ \text{on $\{|z_{i}|\leq C_{i}\}\subset U_{i}$} \\ 
		-B \ \ \ \ \ \ \ \ \ \ \ \ \text{elsewhere} \ . 
	\end{cases}
\]
It can be easily checked that the constant function $-B<0$ is always a subsolution when $g$ has constant curvature $-2$, hence $u^{-}$ is a subsolution on all $\Sigma$. 
\end{proof}

Uniqueness of the solution to Equation (\ref{eq:PDEproof}) follows from the Cheng and Yau's maximum principle (\cite{chengyaumaxprinciple}). 
\begin{prop}\label{prop:uniqueness}There exists a unique bounded solution to Equation (\ref{eq:PDEproof}) for a given complete metric $g$ and regular meromorphic quadratic differential $q$.
\end{prop}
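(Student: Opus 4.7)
The plan is to dispatch the existence half by direct reference to Proposition~\ref{prop:existence_sol}, which already produces a bounded smooth solution, and to concentrate on uniqueness through Cheng--Yau's generalised maximum principle. That principle requires the ambient metric to be complete and to have Ricci curvature bounded below; on the surface $\Sigma$ this reduces to a uniform lower bound on the Gauss curvature $K_{g}$. By the piecewise description (\ref{eq:metric_coord}), the auxiliary metric $g$ coincides with a flat half-infinite cylinder $|R_{i}||z_{i}|^{-2}|dz_{i}|^{2}$ near every puncture with non-vanishing residue (so $K_{g}=0$ there), with $\tfrac{1}{2}h$ near each zero-residue cusp and on the bulk (so $K_{g}=-2$ there), and is smooth on the compact interpolating annuli. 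In each regime geodesics heading into the punctures have infinite length, so $g$ is complete, and $K_{g}$ is uniformly bounded. Hence Cheng--Yau applies to any bounded smooth function on $(\Sigma,g)$.

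For uniqueness, let $u_{1},u_{2}:\Sigma\rightarrow\R$ be two bounded smooth solutions of Equation~(\ref{eq:PDEproof}) and set $w:=u_{1}-u_{2}$. Writing $F(u,x)=e^{2u}-e^{-2u}\|q\|_{g}^{2}+\tfrac{1}{2}K_{g}$, subtraction of the two equations gives $\tfrac{1}{2}\Delta_{g}w=F(u_{1},\cdot)-F(u_{2},\cdot)$. Since $\partial_{u}F=2e^{2u}+2e^{-2u}\|q\|_{g}^{2}\geq 2e^{2u}$ and since $u_{1},u_{2}$ are uniformly bounded, the mean value theorem produces a function $\xi(x)$ between $u_{1}(x)$ and $u_{2}(x)$ such that
\begin{equation*}
    \tfrac{1}{2}\Delta_{g}w \;=\; 2\bigl(e^{2\xi}+e^{-2\xi}\|q\|_{g}^{2}\bigr)\,w \;\geq\; c_{0}\,w,
\end{equation*}
where $c_{0}>0$ depends only on the two supremum bounds of $u_{1}$ and $u_{2}$.

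To conclude, I apply Cheng--Yau's maximum principle to the bounded function $w$: there is a sequence $x_{n}\in\Sigma$ with $w(x_{n})\rightarrow M:=\sup_{\Sigma}w$ and $\limsup_{n}\Delta_{g}w(x_{n})\leq 0$. If $M>0$, then for $n$ large enough $w(x_{n})>M/2$ and hence $\tfrac{1}{2}\Delta_{g}w(x_{n})\geq c_{0}M/2>0$, which contradicts the Cheng--Yau inequality; therefore $M\leq 0$, i.e.\ $u_{1}\leq u_{2}$ on $\Sigma$. Interchanging the roles of $u_{1}$ and $u_{2}$ gives the reverse inequality, so $u_{1}\equiv u_{2}$. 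The main subtlety lies in verifying that Cheng--Yau legitimately applies to the ad hoc metric $g$ built in (\ref{eq:metric_coord}), and in securing the uniform lower bound $c_{0}$ on $\partial_{u}F$ from the boundedness of the two solutions; once these are in hand, the strict monotonicity of $F$ in $u$ converts the comparison at the supremum into an essentially automatic inequality.
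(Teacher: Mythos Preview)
Your proof is correct and follows essentially the same route as the paper: take the difference of two bounded solutions, use the strict monotonicity of $F$ in $u$ (together with the uniform bounds on $u_{1},u_{2}$) to get a differential inequality of the form $\Delta_{g}w\geq c\,w$, and apply Cheng--Yau's maximum principle on the complete metric $g$ with bounded curvature to force $\sup w\leq 0$, then swap roles. Your write-up is in fact slightly more careful than the paper's in explicitly checking that $g$ is complete with bounded curvature from the piecewise description (\ref{eq:metric_coord}); one small cosmetic point is that the displayed inequality $\tfrac{1}{2}\Delta_{g}w\geq c_{0}w$ is only literally valid where $w\geq 0$, but since you invoke it only at points $x_{n}$ with $w(x_{n})>M/2>0$, the argument goes through (the paper makes the same harmless elision).
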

\begin{proof} Suppose $u$ and $u'$ are two bounded solution to Equation (\ref{eq:PDEproof}). The difference $\eta=u-u'$ satisfies 
\[
	\frac{1}{2}\Delta_{g}\eta=F(u,x)-F(u',x) 
\]
where $F(u,x)=e^{2u}-e^{-2u}\|q\|_{g}^{2}+\frac{1}{2}K_{g}$. Since $u$ and $u'$ are bounded, there exists a positive constant $C$ such that
\[
	\Delta_{g}\eta\geq C\eta \ .
\]
Since $g$ is complete and has bounded curvature, Cheng and Yau's result implies that there exists a sequence $x_{n}\in \Sigma$ such that
\[
	\Delta_{g}\eta(x_{n})\leq \frac{1}{n}  \ \ \ \text{and} \ \ \ \eta(x_{n})\geq M-\frac{1}{n}
\]
where $M=\sup_{\Sigma}\eta$. Therefore, the chain of inequalities
\[
	\frac{1}{n}\geq \Delta_{g}\eta(x_{n})\geq C\eta(x_{n})\geq C\left(M-\frac{1}{n}\right)
\]
implies that $M\leq 0$ and $\eta \leq 0$. By switching the roles of $u$ and $u'$ we obtain similarly that $\eta \geq 0$, hence $u=u'$.
\end{proof}		

\begin{teo}\label{cor:riassunto} For any complete hyperbolic metric $h$ on $\Sigma$ of finite area and for any regular meromorphic quadratic differential $q$ on $\Sigma$ there exists a unique equivariant maximal embedding $\tilde{\sigma}:\tilde{\Sigma}\rightarrow \AdS_{3}$ with induced metric $I$ conformal to $h$ and second fundamental form $II=\Re(2q)$. Moreover, $I$ is complete and the principal curvatures are in $(-1,1)$.
\end{teo}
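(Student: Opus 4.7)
The plan is to combine Propositions~\ref{prop:existence_sol} and~\ref{prop:uniqueness} with the Fundamental Theorem of space-like surfaces in $\AdS_{3}$. I would first apply those propositions to the auxiliary conformal metric $g$ constructed before Proposition~\ref{prop:existence_sol} to obtain a unique bounded smooth $u:\Sigma\to\R$ satisfying (\ref{eq:PDEproof}). Writing $g=e^{2\psi}h$ and using the conformal invariance noted between (\ref{eq:PDE2}) and (\ref{eq:PDE3}), $u$ converts into a bounded solution $v$ of the original PDE (\ref{eq:PDE2}) relative to $h$. Setting $I:=2e^{2v}h$ and $B:=I^{-1}\Re(2q)$, the operator $B$ is $I$-traceless, the Codazzi equation $d^{\nabla}B=0$ holds because $q$ is holomorphic for the complex structure of $I$, and the Gauss equation $K_{I}=-1-\det B$ is precisely (\ref{eq:PDE2}). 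The Fundamental Theorem applied on the universal cover then yields a maximal isometric embedding $\tilde\sigma:\tilde\Sigma\to\AdS_{3}$ realising these data, unique up to post-composition by an isometry of $\AdS_{3}$.

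Equivariance, uniqueness and completeness are then direct corollaries. Since $(I,B)$ descends to $\Sigma$, for each $\gamma\in\pi_{1}(\Sigma)$ the embeddings $\tilde\sigma$ and $\tilde\sigma\circ\gamma$ realise the same data on $\tilde\Sigma$, hence differ by a unique isometry $\rho(\gamma)\in \Isom(\AdS_{3})$, defining the holonomy representation; uniqueness of $\tilde\sigma$ up to $\Isom(\AdS_{3})$ reduces to uniqueness of bounded PDE solutions supplied by Proposition~\ref{prop:uniqueness}. Moreover, because $u$ is bounded, $I=2e^{2u}g$ is bi-Lipschitz equivalent to $g$, which is complete on $\Sigma$ by construction (a complete flat cylindrical metric $|R_{i}||z_{i}|^{-2}|dz_{i}|^{2}$ near a puncture with nonzero residue, the complete hyperbolic model near a puncture with vanishing residue, smooth interpolation in between), so $I$ is complete.

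The most delicate point is the bound on the principal curvatures. Since $B$ is traceless, they are $\pm \lambda$ with $\lambda^{2}=-\det B = e^{-4v}\|q\|_{h}^{2}=:F$, so $\lambda<1$ is equivalent to $F<1$, automatic at zeros of $q$. Away from zeros, I would compute $\Delta_{I}\log F$ using the standard identity $\Delta_{h}\log\|q\|_{h}^{2}=4K_{h}$ together with (\ref{eq:PDE2}); the computation collapses to
\[
\Delta_{I}\log F = 4(F-1).
\]
Since $v$ and $\|q\|_{g}$ are bounded, $I$ is complete with bounded curvature (as $K_{I}=-1+F$), so Cheng--Yau's maximum principle yields a sequence $x_{n}$ with $\Delta_{I}\log F(x_{n})\leq 1/n$ and $F(x_{n})\to \sup F$, forcing $\sup F\leq 1$. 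The strict inequality $F<1$ then follows from the strong elliptic maximum principle: an interior equality $F\equiv 1$ would contradict either the vanishing of $F$ at the zeros of $q$ or, in their absence, the hypothesis $\chi(\Sigma)<0$ through the would-be flat Gauss equation $K_{I}\equiv 0$. The main obstacle, I expect, is controlling the curvature of $I$ at the punctures so that Cheng--Yau actually applies: this is exactly what the tailored choice of auxiliary metric $g$ and the boundedness of $v$ are designed to ensure, and does not have a counterpart in the closed-surface case.
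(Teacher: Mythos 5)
Your proposal is correct, and its first two thirds (existence and uniqueness via Propositions~\ref{prop:existence_sol} and~\ref{prop:uniqueness} together with the Fundamental Theorem, equivariance by the monodromy argument, completeness from $I=2e^{2u}g$ with $u$ bounded and $g$ complete) coincide with the paper's proof. Where you genuinely diverge is the bound on the principal curvatures: the paper only observes that $\lambda^{2}=e^{-4u}\|q\|_{g}^{2}$ is \emph{bounded} (it tends to $0$ or $1$ at the punctures) and then invokes the classical fact \cite[Lemma 3.11]{Schlenker-Krasnov} that a complete maximal surface in $\AdS_{3}$ with bounded second fundamental form has principal curvatures in $(-1,1)$, whereas you reprove that fact from scratch via the identity $\Delta_{I}\log F=4(F-1)$ (which does check out with the paper's normalisations), the Cheng--Yau/Omori--Yau maximum principle, and the strong maximum principle. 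Your route is self-contained and makes transparent exactly where completeness, the curvature bound $K_{I}=-1+F\geq -1$ and the boundedness of $F$ enter; the paper's route is shorter at the price of an external citation. Two small points would need polishing in your argument: $\log F$ tends to $-\infty$ at the zeros of $q$, so either apply Omori--Yau to the smooth bounded function $F$ itself using the inequality $\Delta_{I}F\geq 4F(F-1)$ (valid everywhere by continuity), or justify the standard workaround; and the final contradiction in the zero-free case, namely that a complete flat metric cannot live on a surface with $\chi(\Sigma)<0$, should be attributed to Cohn--Vossen (or an equivalent Gauss--Bonnet statement for complete metrics), since plain Gauss--Bonnet does not apply on the open surface $\Sigma$. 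Neither point is a real obstruction, so the proposal stands as a valid, more hands-on alternative to the paper's citation-based step.
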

\begin{proof}Existence and uniqueness of such maximal embedding follows from the above discussion. Moreover, the induced metric can be written as $I=2e^{2u}g$, where $g$ is the metric defined at the beginning of Section \ref{sec:existence_max} and $u$ is the solution to Equation (\ref{eq:PDE3}), hence it is complete because $g$ is complete and $u$ is bounded. \\
\indent Let $\lambda$ be the positive principal curvature of the maximal embedding. By definition of $q$, we have
\[
	-\lambda^{2}=\det(B)=e^{-4u}\|q\|_{g}^{2} \to \begin{cases} 0  \ \ \ \text{if the residue vanishes at $p_{i}$} \\
								    1  \ \ \  \text{otherwise}
							\end{cases}
\]
thus $\lambda$ is bounded. A classical fact about maximal surfaces in anti-de Sitter space (\cite[Lemma 3.11]{Schlenker-Krasnov}) implies that $\lambda \in [0,1)$.    
\end{proof}

\section{Description of the holonomy representation}\label{sec:holonomy}
The equivariant maximal embedding $\tilde{\sigma}:\tilde{\Sigma}\rightarrow \AdS_{3}$ comes with a representation $\rho:\pi_{1}(\Sigma)\rightarrow \Pp\SO_{0}(2,2)$ such that
\[
   \tilde{\sigma}(\gamma \cdot x)=\rho(\gamma)\tilde{\sigma}(x) \ \ \ \forall x \in \tilde{\Sigma}  \ \ \forall \gamma \in \pi_{1}(\tilde{\Sigma}) \ .
\]
Identifying $\Pp\SO_{0}(2,2)$ with $\dPSL$, $\rho$ determines and is determined by a couple of representations $\rho_{l,r}:\pi_{1}(\Sigma) \rightarrow \PSL(2,\R)$. As explained in Section \ref{sec:relpara}, these are holonomies of hyperbolic structures on $\Sigma$ and can be described explicitly in terms of the data of the maximal embedding $\tilde{\sigma}$. In particular, we are able to compute the holonomy of the peripheral curves in terms of the complex residues. \\

Let us first describe the right setting to perform this computation. We identify the universal cover of $\Sigma$ with the upper half-plane $\h^{2}=\{w=x+iy \in \C \ | \ y>0\}$. Each puncture of $\Sigma$ corresponds to a parabolic element in $\PSL(2,\R)$, which is conjugate to $\gamma(w)=w+2\pi$. For a given puncture $p$ (we suppress the indices in this discussion), we choose a local conformal coordinate $z$ and identify a neighbourhood of $p$ in $\Sigma$ with the punctured disk $D_{0}=\{z \in \C \ | \ 0<|z|<\epsilon\}$. Let $\zeta:\h^{2} \rightarrow D_{0}$ be the covering map $\zeta(w)=e^{iw}$. The punctured disk $D_{0}$ lifts to the strip $\{w \ | \ y>-\log(\epsilon) \}$ and the map $\gamma$ generates the deck transformations for the covering $\zeta$. Moreover, for every $y>>0$, each horizontal line segment $\gamma_{y}(t)=(t,y)\in \h^{2}$ for $t \in [0,2\pi]$ projects under $\zeta$ to a peripheral curve around the puncture. \\

From the work of Krasnov and Schlenker (\cite{Schlenker-Krasnov}), we know that the left representation $\rho_{l}:\pi_{1}(\Sigma)\rightarrow \PSL(2,\R)$ is the holonomy of the hyperbolic metric on $\Sigma$
\[
	h_{l}=I(E+JB, E+JB) \ ,
\]
where $E:T\Sigma \rightarrow T\Sigma$ is the identity operator, $B$ is the shape operator of the maximal embedding $\tilde{\sigma}$ and $J$ is the complex structure compatible with the induced metric. We need to understand the nature of this metric around a puncture. In the $w$-coordinate, the quadratic differential $q$ is given by
\[
	\zeta^{*}q=-R(1+O(e^{-y}))dw^{2} \ 
\]
thus, recalling that $I=2e^{2u}g$ and $B=I^{-1}\Re(2q)$, we can write
\[
	\zeta^{*}h_{l}(w)=2e^{2u}(1+\lambda^{2})g-2iR(1+O(e^{-y}))dw^{2}+2i\bar{R}(1+O(e^{-y}))d\bar{w}^{2} \ .
\]
This is the local expression of a hyperbolic metric with geodesic boundary (\cite[p. 516]{Mike_infenergy}) and the length of the boundary curve can be computed as
\[
	\lim_{y \to +\infty}\ell_{h_{l}}(\zeta(\gamma_{y})) \ .
\]
In this case we have
\begin{align*}
	\ell_{h_{l}}(\zeta(\gamma_{y}))&=\int_{0}^{2\pi}\sqrt{2e^{2u}(1+\lambda^{2})|R|+4\Imm(R)(1+O(e^{-y}))} dt \\
	 &\xrightarrow{y\to +\infty} 4\pi\sqrt{|R|+\Imm(R)}
\end{align*}
by Theorem \ref{cor:riassunto}. Notice that if $R=0$ or $\Re(R)=0$ and $\Imm(R)<0$, the length of the boundary vanishes, thus the corresponding puncture is a cusp end. A similar reasoning can be applied also to the right-representation $\rho_{r}:\pi_{1}(\Sigma) \rightarrow \PSL(2,\R)$ which is the holonomy of the hyperbolic metric
\[
	h_{r}=I(E-JB, E-JB) \ ,
\]
and leads to the following result:

\begin{prop}\label{prop:holonomy} Let $\gamma \in \pi_{1}(\Sigma)$ be a peripheral curve around a puncture $p$ with complex residue $R$. Then 
\begin{enumerate}[a)]
	\item if $\Re(R)\neq 0$, then $\rho_{l}(\gamma)$ and $\rho_{r}(\gamma)$ are both hyperbolic with translation length $\ell_{l}(\gamma)=4\pi\sqrt{|R|+\Imm(R)}$ and $\ell_{r}(\gamma)=4\pi\sqrt{|R|-\Imm(R)}$;
	\item if $\Re(R)=0$ and $\Imm(R)>0$, then $\rho_{r}(\gamma)$ is parabolic and $\rho_{l}(\gamma)$ is hyperbolic with translation length $\ell_{l}(\gamma)=4\pi\sqrt{2\Imm(R)}$;
	\item if $\Re(R)=0$ and $\Imm(R)<0$, then $\rho_{l}(\gamma)$ is parabolic and $\rho_{r}(\gamma)$ is hyperbolic with translation length $\ell_{r}(\gamma)=4\pi\sqrt{-2\Imm(R)}$;
	\item if $R=0$, then $\rho_{l}(\gamma)$ and $\rho_{r}(\gamma)$ are both parabolic. 
\end{enumerate}
\end{prop}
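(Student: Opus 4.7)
The plan is to finish the case analysis foreshadowed by the length computation carried out just before the statement. The preceding paragraphs establish
\[
	\lim_{y\to+\infty}\ell_{h_{l}}(\zeta(\gamma_{y}))=4\pi\sqrt{|R|+\Imm(R)} \ ,
\]
and I would first note that running the same computation for $h_{r}=I(E-JB,E-JB)$ is mechanical: the sign change in $JB$ reverses the sign of the $2iR\,dw^{2}$ cross-term, producing
\[
	\zeta^{*}h_{r}(w)=2e^{2u}(1+\lambda^{2})g+2iR(1+O(e^{-y}))dw^{2}-2i\bar{R}(1+O(e^{-y}))d\bar{w}^{2} \ ,
\]
whose boundary length along $\gamma_{y}$ tends to $4\pi\sqrt{|R|-\Imm(R)}$ by the same dominated-convergence argument and the bounds on $u$ and $\lambda$ from Theorem \ref{cor:riassunto}.

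Next I would translate these limits into statements about the peripheral holonomies. The local form displayed above is exactly the standard model of a hyperbolic metric with either a geodesic boundary component (when the limiting length is positive) or a cusp end (when it vanishes); in the former case the limiting length equals the length of the core geodesic, hence the translation length of the corresponding hyperbolic element of $\PSL(2,\R)$, and in the latter case the peripheral element is parabolic. In particular $\rho_{l}(\gamma)$ and $\rho_{r}(\gamma)$ are never elliptic nor trivial.

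The proposition then reduces to sign bookkeeping on the residue $R=\Re(R)+i\Imm(R)$. In case (a), $\Re(R)\neq 0$ gives $|R|^{2}>\Imm(R)^{2}$, so both $|R|\pm\Imm(R)$ are strictly positive and both holonomies are hyperbolic with the advertised translation lengths. In case (b), $\Re(R)=0$ and $\Imm(R)>0$ give $|R|=\Imm(R)$, whence $|R|-\Imm(R)=0$ forces $\rho_{r}(\gamma)$ parabolic while $|R|+\Imm(R)=2\Imm(R)>0$ makes $\rho_{l}(\gamma)$ hyperbolic of length $4\pi\sqrt{2\Imm(R)}$. Case (c) is symmetric with the roles of $\rho_{l}$ and $\rho_{r}$ swapped, and case (d) $R=0$ makes both limits vanish, yielding two parabolics.

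The only genuinely delicate point, and the one I would isolate as a lemma if writing this out carefully, is the identification of the limit of peripheral lengths with the translation length of the peripheral holonomy. This relies on recognising the displayed expression for $\zeta^{*}h_{l,r}$ as the standard Fermi-coordinate expansion of a hyperbolic metric near a geodesic boundary or cusp, plus the fact that Theorem \ref{cor:riassunto} guarantees completeness and the asymptotic behaviour $u\to 0$, $\lambda^{2}\to 1$ (or $\lambda^{2}\to 0$ when $R=0$) needed for the cross-term to dominate the limit. Once this identification is in place the proposition is a four-line case distinction.
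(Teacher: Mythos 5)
Your proposal is correct and follows essentially the same route as the paper: the paper also computes $\lim_{y\to+\infty}\ell_{h_{l,r}}(\zeta(\gamma_{y}))$ from the displayed expansion of $\zeta^{*}h_{l}$ (with the sign of the cross-term flipped for $h_{r}$), invokes the asymptotics $u\to 0$, $\lambda^{2}\to 1$ (or $0$) from Theorem \ref{cor:riassunto}, and identifies the local expression with the standard model of a hyperbolic metric with geodesic boundary (citing \cite{Mike_infenergy}) or a cusp, exactly the identification you isolate as the delicate step, before doing the same sign bookkeeping on $|R|\pm\Imm(R)$.
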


\begin{remark} Later we will recover Proposition \ref{prop:holonomy} by different methods, without using the theory developed in \cite{Schlenker-Krasnov}. However, our techniques will not distinguish between left- and right- representations. 
\end{remark}

\section{Description of the domain of dependence}\label{sec:boundary}

Since the maximal surface is complete, its boundary at infinity is a locally achronal curve $\Gamma$ and determines a domain of dependence $\mathcal{D}(\Gamma)\subset \AdS_{3}$ by considering points whose dual space-like planes are disjoint from $\Gamma$. Let us point out the relations between $\Gamma$ and the limit set of the representation $\rho=(\rho_{l},\rho_{r})$. Recall that the boundary at infinity of $\AdS_{3}$ is identified with $\R\Pp^{1}\times \R\Pp^{1}$ and $\rho_{l}$ (resp. $\rho_{r}$) acts on the left (resp. right) factor by projective tranformations. Given an element $\gamma \in \pi_{1}(\Sigma)$, we denote with $x^{\pm}_{\bullet}(\gamma)$ the attractive and repulsive fixed points of $\rho_{\bullet}$, with the convention that $x^{+}_{\bullet}(\gamma)=x^{-}_{\bullet}(\gamma)$ if $\rho_{\bullet}(\gamma)$ is parabolic. These define four points (possibly coincident) on the boundary at infinity of $\AdS_{3}$:
\begin{align*}
	x^{++}(\rho(\gamma))&=(x^{+}_{l}(\gamma), x^{+}_{r}(\gamma)) \ \ \ \ \ \ \ \ \ \ x^{+-}(\rho(\gamma))=(x^{+}_{l}(\gamma), x^{-}_{r}(\gamma)) \\
	x^{--}(\rho(\gamma))&=(x^{-}_{l}(\gamma), x^{-}_{r}(\gamma)) \ \ \ \ \ \ \ \ \ \  x^{-+}(\rho(\gamma))=(x^{-}_{l}(\gamma), x^{+}_{r}(\gamma)) \ .
\end{align*}
It follows immediately from the definition that 
\[
	\lim_{n\to +\infty} \rho(\gamma)^{n}\cdot x= x^{++}(\rho(\gamma))
\]
for every $x \in \partial_{\infty}\AdS_{3} \setminus \{x^{+-}(\rho(\gamma)), x^{-+}(\rho(\gamma)), x^{--}(\rho(\gamma))\}$. Therefore, the limit set
\[
	\Lambda_{\rho}=\overline{\{( x^{++}(\rho(\gamma))\in \partial_{\infty}\AdS_{3} \ | \ \gamma \in \pi_{1}(\Sigma) \}}
\]
is the smallest closed $\rho(\pi_{1}(\Sigma))$-invariant subset in the boundary at infinity of anti-de Sitter space. Since the maximal surface found in Section \ref{sec:existence_max} is $\rho(\pi_{1}(\Sigma))$-invariant, its boundary at infinity must contain $\Lambda_{\rho}$. We have thus proved:

\begin{lemma}\label{prop:limitset}Let $\rho:\pi_{1}(\Sigma)\rightarrow \Pp\SO_{0}(2,2)$ be the holonomy representation of a maximal embedding $\tilde{\sigma}:\tilde{\Sigma}\rightarrow \AdS_{3}$. Then the limit set $\Lambda_{\rho}$ of $\rho$ is contained in the boundary at infinity of $\tilde{\sigma}(\tilde{\Sigma})$.
\end{lemma}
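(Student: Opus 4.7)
The plan is to exploit that $\Gamma$, the boundary at infinity of $\tilde\sigma(\tilde\Sigma)$, is a closed and $\rho(\pi_{1}(\Sigma))$-invariant subset of $\partial_\infty\AdS_3$: closedness is immediate from its description as a topological circle (\cite[Corollary 3.3]{Tambu_poly}), and $\rho$-invariance follows from the equivariance of $\tilde\sigma$ combined with the fact that isometries of $\AdS_3$ extend continuously to the boundary. Since by definition $\Lambda_{\rho}$ is the closure of $\{x^{++}(\rho(\gamma)) : \gamma \in \pi_{1}(\Sigma)\}$, it suffices to prove that every attractive fixed point $x^{++}(\rho(\gamma))$ belongs to $\Gamma$.

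Fix a non-trivial $\gamma \in \pi_{1}(\Sigma)$. I would pick a base point $p = (p_{l}, p_{r}) \in \Gamma$ such that $p_{l} \neq x^{-}_{l}(\gamma)$ and $p_{r} \neq x^{-}_{r}(\gamma)$, and then iterate $p$ under $\rho(\gamma)$. Such a point exists because $\Gamma$ is a topological circle while $\{x^{-}_{l}(\gamma)\}\times \R\Pp^{1}$ and $\R\Pp^{1}\times \{x^{-}_{r}(\gamma)\}$ are light-like leaves of the two foliations of $\partial_\infty\AdS_3$; by Lemma \ref{lm:boundary} a locally achronal curve cannot be contained in the union of two such leaves, so $\Gamma$ meets them in only a proper subset. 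With this choice, the product dynamics yields
\[
    \rho(\gamma)^{n}\cdot p = \bigl(\rho_{l}(\gamma)^{n} p_{l},\; \rho_{r}(\gamma)^{n} p_{r}\bigr) \xrightarrow[n\to +\infty]{} \bigl(x^{+}_{l}(\gamma), x^{+}_{r}(\gamma)\bigr) = x^{++}(\rho(\gamma)),
\]
where convergence uses that $\rho_{l}(\gamma)$ and $\rho_{r}(\gamma)$ are either hyperbolic or parabolic, being holonomies of complete hyperbolic metrics on $\Sigma$ (Proposition \ref{prop:holonomy}). Each iterate $\rho(\gamma)^{n}\cdot p$ lies in $\Gamma$ by $\rho$-invariance, and closedness of $\Gamma$ then delivers $x^{++}(\rho(\gamma)) \in \Gamma$.

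Taking the closure over all $\gamma \in \pi_{1}(\Sigma)$ produces $\Lambda_{\rho} \subseteq \partial_\infty\tilde\sigma(\tilde\Sigma)$, as required. The only step requiring a moment of care is the existence of a base point $p \in \Gamma$ avoiding both exceptional foliation lines; this is immediate from the topological structure of $\Gamma$ as a circle together with the achronality constraint of Lemma \ref{lm:boundary}, so no real obstacle arises.
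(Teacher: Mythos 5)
Your overall strategy is exactly the paper's: the paper simply observes that $\Lambda_{\rho}$ is the smallest closed $\rho(\pi_{1}(\Sigma))$-invariant subset of $\partial_{\infty}\AdS_{3}$ and that the boundary at infinity of the invariant surface is such a set; your write-up just makes the underlying north--south dynamics explicit. In fact you are more careful than the paper on one point: the exceptional set for the convergence $\rho(\gamma)^{n}x \to x^{++}(\rho(\gamma))$ is the union of the two leaves through the repulsive fixed points, $\bigl(\{x_{l}^{-}(\gamma)\}\times \R\Pp^{1}\bigr)\cup\bigl(\R\Pp^{1}\times\{x_{r}^{-}(\gamma)\}\bigr)$, not just the three points listed in the text.

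There is, however, one step whose justification does not hold as stated: you claim that, by Lemma \ref{lm:boundary}, a locally achronal curve cannot be contained in the union of those two leaves. That is false as a general statement: a single full light-like leaf is itself a locally achronal topological circle (its points are causally, not chronologically, related), and it is contained in that union, so local achronality alone cannot produce your base point. The gap is easily closed, but it needs a different ingredient. An embedded circle contained in the wedge $\bigl(\{x_{l}^{-}\}\times \R\Pp^{1}\bigr)\cup\bigl(\R\Pp^{1}\times\{x_{r}^{-}\}\bigr)$ (two circles meeting in the single point $(x_{l}^{-},x_{r}^{-})$) must be one of the two leaves entirely; so if no admissible base point existed, $\Gamma$ would equal, say, $\{x_{l}^{-}\}\times\R\Pp^{1}$. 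But $\Gamma$ is $\rho(\pi_{1}(\Sigma))$-invariant, so this would force $\rho_{l}(\pi_{1}(\Sigma))$ to fix the point $x_{l}^{-}\in\R\Pp^{1}$, which is impossible since $\rho_{l}$ is the holonomy of a hyperbolic structure on $\Sigma$ (discrete, faithful, non-elementary); the same argument excludes the other leaf. With the base point obtained this way, the rest of your argument (invariance of $\Gamma$ from equivariance of $\tilde{\sigma}$ and continuity of the boundary extension of isometries, closedness of $\Gamma$, then taking closures) is correct and matches the paper's proof.
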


We notice in particular that if $\rho_{r}$ and $\rho_{l}$ are the holonomies of complete hyperbolic metrics on $\Sigma$, then $\Lambda_{\rho}$ is a topological circle, hence the boundary at infinity coincides with $\Lambda_{\rho}$. Otherwise, $\Lambda_{\rho}$ is a Cantor set (\cite[Proposition 7.2]{bsk_multiblack}) and we need to describe how to complete $\Lambda_{\rho}$ to the whole boundary at infinity of the maximal surface. If for a peripheral element $\gamma \in \pi_{1}(\Sigma)$, we have that $\rho_{l}(\gamma)$ is hyperbolic and $\rho_{r}(\gamma)$ is parabolic (or viceversa), then, since $x^{+}_{r}(\gamma)=x^{-}_{r}(\gamma)$ (or $x^{+}_{l}(\gamma)=x^{-}_{l}(\gamma)$), the limit set contains the points $x^{++}(\rho(\gamma))$ and $x^{--}(\rho(\gamma))$ that are causally related. Hence, by Lemma \ref{lm:boundary} the boundary at infinity of the maximal surface contains the whole light-like segment joining them.\\  

We are thus left to understand the boundary at infinity when the holonomies of a peripheral element are both hyperbolic, equivalently, when the residue of the quadratic differential at the corresponding puncture is non-zero and not purely-imaginary. Recall that we modelled a neighbourhood of a puncture of $\Sigma$ by a punctured disk $D_{0}$ and we defined the covering map $\zeta(w)=e^{iw}$ from the upper-half plane $\h^{2}$ to $D_{0}$. We introduce another change of coordinates: choose $\xi \in \C$ so that $\xi^{2}=-R\neq 0$ and define $\omega=\xi w=\eta+i\tau$, so that
\[
	q(\omega)=d\omega^{2}(1+o(1)) \ \ \ \text{and} \ \ \ g(\omega)=|d\omega|^{2} \ .
\]
Since the conformal factor $u$ tends to $0$ at the puncture, the embedding data of the maximal surface can be approximated in the $\omega$-coordinate by the constant quadratic differential $d\omega^{2}$ and the constant flat metric $|d\omega|^{2}$ close to the puncture. This suggests that the maximal embedding $\tilde{\sigma}$ should look like the horospherical surface in such a neighbourhood. The rest of the section builds up on this intuition.

\subsection{The frame field of a maximal embedding} Let us consider $\R^{4}\subset \C^{4}$ and extend the $\R$-bilinear form of signature $(2,2)$ to the hermitian product on $\C^{4}$ given by
\[
	\langle z,w\rangle=z_{1}\bar{w}_{1}+z_{2}\bar{w}_{2}-z_{3}\bar{w}_{3}-z_{4}\bar{w}_{4} \ .
\]
Given a maximal conformal embedding $\tilde{\sigma}:\h^{2} \rightarrow \AdS_{3}$, with a slight abuse of notation, we still denote with $\tilde{\sigma}:\h^{2}\rightarrow \widehat{\AdS_{3}} \subset \C^{4}$ one of its lifts. Let $N$ be the unit normal vector field such that $\{\tilde{\sigma}_{w}, \tilde{\sigma}_{\bar{w}}, N, \tilde{\sigma}\}$ is an oriented frame in $\C^{4}$. We define
\[
	q=\langle N_{w}, \tilde{\sigma}_{\bar{w}} \rangle \ .
\]
The embedding being maximal implies that $q$ is a holomorphic quadratic differential on $\h^{2}$. Since the embedding is conformal, we can define a function $\phi:\h^{2} \rightarrow \R$ such that
\[
	\langle \tilde{\sigma}_{w}, \tilde{\sigma}_{w} \rangle= \langle \tilde{\sigma}_{\bar{w}}, \tilde{\sigma}_{\bar{w}} \rangle=e^{2\phi} \ .
\]
These are related to the embedding data of $\tilde{\sigma}$ as follows: the induced metric on $\tilde{\sigma}(\h^{2})$ is $I=2e^{2\phi}|dw|^{2}$ and the second fundamental form is $II=\Re(2q)$. The vectors 
\[
	v_{1}=\frac{\tilde{\sigma}_{w}}{e^{\phi}} \ \ \ v_{2}=\frac{\tilde{\sigma}_{\bar{w}}}{e^{\phi}} \ \ \ N, \ \ \ \text{and} \ \ \ \tilde{\sigma}
\]
give a unitary frame of $(\C^{4}, \langle \cdot, \cdot \rangle)$ at every point $w\in \h^{2}$. Taking the derivatives of the fundamental relations
\[
	\langle N,N\rangle=\langle \tilde{\sigma},\tilde{\sigma} \rangle=-1 \ \ \langle v_{j}, N \rangle=\langle v_{j},\tilde{\sigma}\rangle=0 \ \ \langle N_{z}, \tilde{\sigma}_{\bar{w}} \rangle=q \ \ \langle v_{j},v_{j}\rangle=1
\]
one deduces that
\[
	N_{\bar{w}}=e^{-\phi}\bar{q}v_{1} \ \ \ \overline{\partial}v_{1}=-\phi_{\bar{w}}v_{1}+e^{\phi}\tilde{\sigma} \ \ \ \text{and} \ \ \ \overline{\partial}v_{2}=\phi_{\bar{w}}v_{2}+\bar{q}e^{-\phi}N \ .
\]
Therefore, the pull-back of the Levi-Civita connection $\nabla$ of $(\C^{4}, \langle \cdot, \cdot, \rangle)$ via $\tilde{\sigma}$ can be written in the frame $\{v_{1},v_{2},N,\tilde{\sigma}\}$ as
\begin{equation}\label{eq:framefield}
	\tilde{\sigma}^{*}\nabla=Vd\bar{w}+Udw=
		\begin{pmatrix} -\phi_{\bar{w}} & 0 & e^{-\phi}\bar{q} & 0 \\
				0 & \phi_{\bar{w}} & 0 & e^{\phi} \\ 
				0 & e^{-\phi}\bar{q} & 0 & 0 \\
				e^{\phi} & 0 & 0 & 0
		\end{pmatrix}d\bar{w}+ \begin{pmatrix} \phi_{w} & 0 & 0 & e^{\phi} \\
							0 & -\phi_{w} & qe^{-\phi} & 0 \\
							qe^{-\phi} & 0 & 0 & 0 \\
							0 & e^{\phi} & 0 & 0 
				       \end{pmatrix}dw \ .
\end{equation}
Notice that the flatness of $\tilde{\sigma}^{*}\nabla$ is equivalent to $\phi$ being a solution of the PDE
\[
	\frac{1}{2}\Delta \phi= e^{2\phi}-e^{-2\phi}|q|^{2}
\]
which coincides with Equation (\ref{eq:PDE}) when the background metric is flat. \\

Viceversa, if a holomorphic quadratic differential $q$ and a solution $\phi$ of the above equation are given, the $1$-form $Vd\bar{w}+Udw$ can be integrated to a map $F:\h^{2}\rightarrow \SL(4,\C)$, which is the frame field of a maximal embedding into $\AdS_{3}$ with induced metric $I=2e^{2\phi}|dw|^{2}$ and second fundamental form $II=\Re(2q)$. Moreover, this is unique once the initial conditions are fixed.

\subsection{The horospherical surface}The frame field can be written explicitly in the special case when $q$ is a constant holomorphic quadratic differential, and the associated maximal surface in $AdS_{3}$ appears in the literature as the horospherical surface (\cite{bon_schl}, \cite{seppimaximal}, \cite{TambuCMC}). See also \cite{Tambu_poly}.\\

Suppose $q=d\omega^{2}$ is a holomorphic quadratic differential defined on the complex plane $\C$. The corresponding solution to the flatness equation is then clearly $\phi=0$. The $1$-form becomes 
\[
	V_{0}d\bar{\omega}+U_{0}d\omega=\begin{pmatrix}
			0 & 0 & 1 & 0 \\
			0 & 0 & 0 & 1 \\
			0 & 1 & 0 & 0 \\
			1 & 0 & 0 & 0 
			\end{pmatrix} d\bar{\omega}+ \begin{pmatrix}
						0 & 0 & 0 & 1 \\
						0 & 0 & 1 & 0 \\
						1 & 0 & 0 & 0 \\
						0 & 1 & 0 & 0 
						\end{pmatrix}d\omega \ .
\]
The frame field of the horospherical surface is thus 
\[
	F_{0}(\omega)=A_{0}\exp(U_{0}\omega+V_{0}\bar{\omega}) \ ,
\]
for some constant matrix $A_{0} \in \SL(4, \C)$. For our convenience, we choose
\[
    A_{0}=\frac{1}{\sqrt{2}}\begin{pmatrix}
            1 & 1 & 0 & 0 \\
            -i & i & 0 & 0 \\
            0 & 0 & 1 & 1 \\
            0 & 0 & -1 & 1
            \end{pmatrix}
\]
A simple computation shows that the matrix $U_{0}\omega+V_{0}\bar{\omega}$ is diagonalisable by a constant unitary matrix $S$ so that
\[
	S^{-1}(U_{0}\omega+V_{0}\bar{\omega})S=\diag(2\Re(\omega), 2\Imm(\omega), -2\Re(\omega), -2\Imm(\omega)) \ .
\]
Therefore, we can write 
\[
	F_{0}(\omega)=A_{0}S\diag(e^{2\Re(\omega)}, e^{2\Imm(\omega)}, e^{-2\Re(\omega)}, e^{-2\Imm(\omega)})S^{-1} \ . 
\]
The resulting maximal embedding is given by the last column of $F_{0}(\omega)$, that is
\[
    \sigma_{0}=\frac{1}{\sqrt{2}}(\sinh(2\Re(\omega)), \sinh(2\Imm(\omega)), \cosh(2\Re(\omega)), \cosh(2\Imm(\omega)))^{t} \ .
\]
In particular, we can describe explicitly the boundary at infinity $\Delta$ of $\sigma_{0}$: it consists of four light-like segments as the following table shows.

\medskip
\begin{table}[!htb]
\begin{center}
\begin{tabular}{l l l}
\hline
\textbf{Direction $\theta$} & \textbf{Projective limit $v_\theta$ of $\sigma_{0}(te^{i\theta}+iy)$}\\
\hline
$\theta \in (-\tfrac{\pi}{4}, \tfrac{\pi}{4})$ & $v_\theta = [1,0,1,0]^{t}$\\
$\theta = \tfrac{\pi}{4}$ & $v_y=[1,s,1,s]^{t}$ for some $s(y) \in \R^{+}$\\
$\theta \in (\tfrac{\pi}{4}, \tfrac{3\pi}{4})$ & $v_\theta = [0,1,0,1]^{t}$\\
$\theta = \tfrac{3\pi}{4}$ & $v_y=[-s,1,s,1]^{t}$ for some $s(y) \in \R^{+}$\\
$\theta\in (\tfrac{3\pi}{4}, \tfrac{5\pi}{4})$  & $v_\theta = [-1,0,1,0]^{t}$\\
$\theta = \tfrac{5\pi}{4}$ & $v_y=[-1,-s,1,s]^{t}$ for some $s(y) \in \R^{+}$\\
$\theta\in (\tfrac{5\pi}{4}, \tfrac{7\pi}{4})$  & $v_\theta = [0,-1,0,1]^{t}$\\
$\theta = \tfrac{7\pi}{4}$ & $v_y=[s,-1,s,1]^{t}$ for some $s(y) \in \R^{+}$\\
\hline\\
\end{tabular}
\end{center}
\caption{Limits of the standard horospherical surface along rays}\label{table:1}
\end{table}

\noindent Moreover, in the boundary at infinity we can see two past-directed and two future-directed saw-teeth (Figure \ref{fig:sawteeth}).

\begin{figure}[htbp]
\centering
\includegraphics[height=6cm]{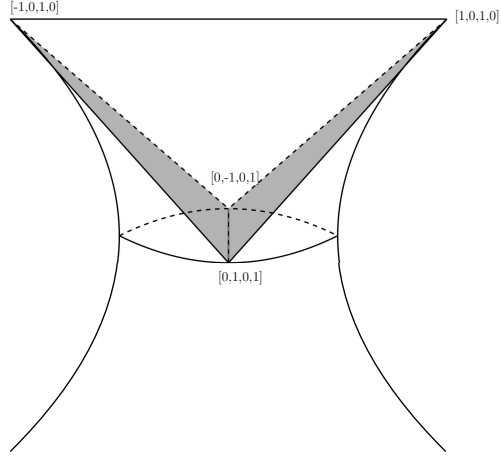}
\caption{\small{Saw-teeth in the boundary at infinity of the horopherical surface. Light-like planes bounding future-directed saw-teeth are highlighted.}}\label{fig:sawteeth}
\end{figure}

\subsection{Alternative way to determine the holonomy representation}\label{subsec:holonomy_alt}
In case $\tilde{\sigma}:\h^{2} \rightarrow \AdS_{3}$ is equivariant with respect to a representation $\rho:\pi_{1}(\Sigma)\rightarrow \Pp\SO_{0}(2,2)$, we can use the frame field equation (\ref{eq:framefield}) to compute the holonomy along a peripheral curve. Fix a base point $w_{0} \in \h^{2}$. For every deck transformation $\gamma \in \pi_{1}(\Sigma)$, that we think of as a holomorphic automorphism of $\h^{2}$, the uniqueness of the solution to the initial value problem implies that $F(\h^{2})=\gamma^{*}F(\h^{2})$, where the frame field pulls back under $\gamma$ to
\[
 \gamma^{*}F=\{\gamma'\tilde{\sigma}_{w}\circ \gamma, \overline{\gamma'}\tilde{\sigma}_{\bar{w}}\circ \gamma, N\circ \gamma, \tilde{\sigma}\circ \gamma\} \ .
\]
In particular, if $\gamma$ is a peripheral curve, we can represent the deck tranformation as $\gamma(w)=w+2\pi$. Thus the matrix $H_{\gamma}$ defined by
\[
	H_{\gamma}: \{ \tilde{\sigma}_{w}(w_{0}), \tilde{\sigma}_{\bar{w}}(w_{0}), N(w_{0}), \tilde{\sigma}(w_{0})\} \mapsto 
			\{ \tilde{\sigma}_{w}(\gamma(w_{0})), \tilde{\sigma}_{\bar{w}}(\gamma(w_{0})), N(\gamma(w_{0})), \tilde{\sigma}(\gamma(w_{0}))\} \ 
\]
is conjugated to a matrix in $\SO_{0}(2,2)$ and its projection to $\Pp\SO_{0}(2,2)$ gives the element $\rho(\gamma)$, up to conjugation. We remark that $H_{\gamma}$ acts on the right on frame fields, whereas $\rho(\gamma)$ acts on the left on column vectors of $\R^{4}$, hence $\rho(\gamma)$ and $H_{\gamma}$ differ by conjugation by the frame field at the base point $w_{0}$. In particular, if we consider the line segment $\gamma_{y}(x)=(x,y) \in \h^{2}$ for $x \in [0,2\pi]$, this projects to a closed peripheral curve on $\Sigma$ for $y>>0$, and the holonomy matrix $H_{y}$ along this path is equal to $\Phi_{y}(2\pi)$, where $\Phi_{y}$ solves the initial value problem
\begin{align*}
	\Phi^{-1}_{y}\frac{\partial \Phi_{y}}{\partial x}&=A_{y}=\begin{pmatrix} -i\phi_{y} & 0 & e^{-\phi}\bar{q} & e^{\phi} \\
							0 & i\phi_{y} & qe^{-\phi} & e^{\phi} \\
							qe^{-\phi} & e^{-\phi}\bar{q} & 0 & 0 \\
							e^{\phi} & e^{\phi} & 0 & 0 
				      \end{pmatrix}\\
			\Phi_{y}(0)&=\Id \ .
\end{align*}
Namely, the family of matrices $\Phi_{y}(x)$ along the path $\gamma_{y}$ obtained in this way satisfy
\[
	F(0,y)\Phi_{y}(x)=F(x,y) \ \ \ \text{for every $x \in [0,2\pi]$} \ .
\]
Since $\sigma^{*}\nabla$ is flat and the loops $\gamma_{y}$ are freely homotopic, all $H_{y}$ are conjugated in $\SL(4,\C)$. Moreover, by the theory of linear ODE with parameters (\cite{ODE}), we know that
\[
	\lim_{y \to +\infty}\Phi_{y}(2\pi)=e^{2\pi A}
\]
where 
\[
	A=\lim_{y \to +\infty}A_{y}=\begin{pmatrix} 0 & 0 & -\bar{R}|R|^{-\frac{1}{2}} & |R|^{\frac{1}{2}} \\
					            0 & 0 & -R|R|^{-\frac{1}{2}} & |R|^{\frac{1}{2}} \\
						    -R|R|^{-\frac{1}{2}} & -\bar{R}|R|^{-\frac{1}{2}} & 0 & 0 \\
						    |R|^{\frac{1}{2}} & |R|^{\frac{1}{2}} & 0 & 0 \\
					\end{pmatrix} \ .
\]
Therefore, we can conclude that all matrices $H_{y}$ have the same eigenvalues $e^{2\pi \lambda_{i}}$, where $\lambda_{i}$ are the eigenvalues of $A$. Those are the roots of the characteristic polynomial 
\[
	\chi_{A}(t)=t^{4}-4|R|t^{2}+4\Imm(R)^{2}
\]
that can be easily computed: 
\[
 \lambda_{1}=-\lambda_{3}=\sqrt{2(|R|+|\Re(R)|)} \ \ \ \text{and} \ \ \ \lambda_{2}=-\lambda_{4}=\sqrt{2(|R|-|\Re(R)|)} \ .
\]
Using the identification between $\Pp\SO_{0}(2,2)$ and $\dPSL$ (\cite[Section 2]{entropy}) one can recover the eigenvalues of the corresponding matrices in $\PSL(2,\R)$ and the results of Proposition \ref{prop:holonomy}, up to the ambiguity of choosing the left and right factor. This ambiguity comes from the fact that this method does not give any information on the position of the eigenvalues along the diagonals and not every permutation of the diagonal entries can be realised by conjugating with an element in $\SO_{0}(2,2)$. \\ 

The representation $\rho(\gamma)$ of a peripheral element $\gamma \in \pi_{1}(\Sigma)$ with base point $w_{0}=(0,y_{0}) \in \h^{2}$ can thus be computed as follows: for every $y>y_{0}$ consider the concatanation of the vertical path from $w_{0}$ to $(0, y)$ followed by $\gamma_{y}$ and then by the vertical path from $(2\pi, y)$ to $(2\pi, y_{0})$. If $M_{y}$ denotes the holonomy along the vertical path, then  
\[
	H_{y_{0}}=M_{y}H_{y}M_{y}^{-1} \ .
\]
Since the resulting path is in the same homotopy class as the peripheral element $\gamma$ for every $y>y_{0}$, we can conclude that
\[
	\rho(\gamma)p=\lim_{y \to +\infty}CM_{y}H_{y}M_{y}^{-1}C^{-1}p \ \ \ \text{for every $p \in \AdS_{3}\cup \partial_{\infty}\AdS_{3}$} 
\]
where $C$ is a constant matrix depending on the frame field at the base point $w_{0}$.

\subsection{Comparison with the horospherical surface}\label{subsec:comparison}We have now all the ingredients to describe the boundary at infinity of the maximal surface $\tilde{\sigma}(\tilde{\Sigma})$ in a neighbourhood of a puncture $p$ with non-vanishing and non-purely imaginary residue. Recall that we identified a neighbourhood of $p$ in $\Sigma$ with a punctured disk $D_{0}=\{z \in \C \ | \ 0< |z|<\epsilon \}$ and defined the covering map $\zeta:\h^{2} \rightarrow D_{0}$ given by $\zeta(w)=e^{iw}$. The punctured disk lifts to a strip $N=\{w \in \h^{2} \ | \ y>-\log(\epsilon)\}$ and every half-ray with direction $\iota \in (0,\pi)$ tends to the puncture when $y\to +\infty$. Moreover, in the $w$-coordinate, the quadratic differential $q$ is written as
\[
	\zeta^{*}q=-R(1+O(e^{-y}))dw^{2} \ \ \ \text{for $y \to +\infty$} \ .
\]
As we saw above, the frame field $F:\tilde{\Sigma}\rightarrow \SL(4,\C)$ satisfies the system of ODE
\begin{align*}
	F^{-1}\frac{\partial F}{\partial x}&=\begin{pmatrix} -i\phi_{y} & 0 & e^{-\phi}\bar{q} & e^{\phi} \\
							0 & i\phi_{y} & qe^{-\phi} & e^{\phi} \\
							qe^{-\phi} & e^{-\phi}\bar{q} & 0 & 0 \\
							e^{\phi} & e^{\phi} & 0 & 0 
				             \end{pmatrix} \\
	F^{-1}\frac{\partial F}{\partial y}&=\begin{pmatrix} i\phi_{x} & 0 & -ie^{-\phi}\bar{q} & ie^{\phi} \\
							0 & -i\phi_{x} & iqe^{-\phi} & -ie^{\phi} \\
							iqe^{-\phi} & -ie^{-\phi}\bar{q} & 0 & 0 \\
							-ie^{\phi} & ie^{\phi} & 0 & 0 
				             \end{pmatrix} \ .
\end{align*}
Using the asymptotics of the conformal factor $\phi$ provided by Proposition \ref{prop:existence_sol}, for $y \to +\infty$ we can write
\begin{align*}
	F^{-1}\frac{\partial F}{\partial x}=A+O(e^{-2\alpha y}) \\
	F^{-1}\frac{\partial F}{\partial y}=B+O(e^{-2\alpha y})
\end{align*}
where
\begin{align*}
	A&=\begin{pmatrix} 0 & 0 & -\bar{R}|R|^{-\frac{1}{2}} & |R|^{\frac{1}{2}} \\
		           0 & 0 & -R|R|^{-\frac{1}{2}} & |R|^{\frac{1}{2}} \\
			   -R|R|^{-\frac{1}{2}} & -\bar{R}|R|^{-\frac{1}{2}} & 0 & 0 \\
			   |R|^{\frac{1}{2}} & |R|^{\frac{1}{2}} & 0 & 0 \\
	\end{pmatrix} \\
	B&=\begin{pmatrix} 0 & 0 & i\bar{R}|R|^{-\frac{1}{2}} & i|R|^{\frac{1}{2}} \\
			   0 & 0 & -iR|R|^{-\frac{1}{2}} & -i|R|^{\frac{1}{2}} \\
			  -iR|R|^{-\frac{1}{2}} & -i\bar{R}|R|^{-\frac{1}{2}} & 0 & 0 \\
			  -i|R|^{\frac{1}{2}} & i|R|^{\frac{1}{2}} & 0 & 0 \\
	\end{pmatrix} \ .	
\end{align*} 
We want to study the asymptotics of the solution of the above system along every half-ray with direction $\iota \in (0, \pi)$ as $y \to +\infty$. First, we introduce a new change of coordinates, in order to relate the above system to the frame field of the horospherical surface. Define $\xi^{2}=-R$ and consider $\omega=\xi w=\eta+i\tau$. In these new cooordinates we have
\[
	\frac{\partial}{\partial x}=\Re(\xi)\frac{\partial}{\partial \eta}+\Imm(\xi)\frac{\partial}{\partial \tau} 
\]
and the vector tangent to a half-ray in direction $\iota \in (0,\pi)$ is given by
\[
	\frac{\partial}{\partial r}=\cos\iota\frac{\partial}{\partial x}+\sin\iota\frac{\partial}{\partial y}=\Re(\xi e^{i\iota})\frac{\partial}{\partial \eta}+\Imm(\xi e^{i\iota})\frac{\partial}{\partial \tau} \ .
\]
The system of ODE for the frame field $\tilde{F}=(\tilde{\sigma}_{\omega}, \tilde{\sigma}_{\bar{\omega}}, N, \tilde{\sigma})$ becomes 
\begin{align}\label{eq:newframePDE}
	\tilde{F}^{-1}\frac{\partial \tilde{F}}{\partial x}&=SDS^{-1}+O(e^{-2r\alpha\sin\iota})\\
	\tilde{F}^{-1}\frac{\partial \tilde{F}}{\partial r}&=SD'S^{-1}+O(e^{-2r\alpha\sin\iota})  \ ,
\end{align}
where 
\[
	D=\diag(\rho_{1}, \rho_{2}, \rho_{3}, \rho_{4}) \ \ \  \text{and} \ \ \ D'=\diag(\mu_{1}, \mu_{2}, \mu_{3}, \mu_{4}) \ ,
\]
with
\begin{align*}
	\rho_{1}&=-\rho_{3}=2\Re(\xi)          \ \ \ \ \ \ \ \ \ \ \ \ \mu_{1}=-\mu_{3}=2\Re(\xi e^{i\iota})          \\
	\rho_{2}&=-\rho_{4}=2\Imm(\xi)          \ \ \ \ \ \ \ \ \ \ \ \  \mu_{2}=-\mu_{4}=2\Imm(\xi e^{i\iota})              \ .
\end{align*}
The theory of ODE with parameters (\cite{ODE}, see also \cite[Appendix A]{Loftin_compactify}) implies that, given an initial condition $B_{0}\in \SO_{0}(2,2)$ that expresses the difference between the frame field of the maximal immersion $\tilde{\sigma}$ and that of the horospherical surface at the base point $\omega_{0}$, the solution to Equation (\ref{eq:newframePDE}) can be written for $r \to +\infty$ as
\begin{equation}\label{eq:framefield_asy}
	\tilde{F}(x,r)=B_{0}A_{0}S(\diag(e^{\mu_{1}r}, e^{\mu_{2}r}, e^{\mu_{3}r}, e^{\mu_{4}r})+o(\diag(e^{\mu_{1}r}, e^{\mu_{2}r}, e^{\mu_{3}r}, e^{\mu_{4}r}))S^{-1} \ .
\end{equation}
Therefore, recalling that the maximal embedding $\tilde{\sigma}$ can be recovered from the last column of $\tilde{F}$, we can conclude that
\begin{equation}\label{eq:embedding}
	\tilde{\sigma}(x,r)=B_{0}\begin{pmatrix}
				\sinh(\mu_{1}r)\\
				 \sinh(\mu_{2}r) \\
				 \cosh(\mu_{3}r)\\
				 \cosh(\mu_{4}r)
				\end{pmatrix}
			+o\left(\begin{pmatrix}
				\sinh(\mu_{1}r)\\
				 \sinh(\mu_{2}r) \\
				 \cosh(\mu_{3}r)\\
				 \cosh(\mu_{4}r)
				\end{pmatrix}\right) \ \  \ \text{as $r \to +\infty$} \ .
\end{equation}
This already implies that $\tilde{\sigma}$ approaches a saw-tooth in $\partial_{\infty}\AdS_{3}$ according to Table \ref{table:1} with direction $\theta=\iota+\arg(\xi)$. We remark that, since $\iota \in (0,\pi)$, only three points of the light-like polygon $B_{0}(\Delta)$ and only one saw-tooth actually appear in the boundary at infinity of $\tilde{\sigma}(\tilde{\Sigma})$. We want to relate these limit points with the fixed points of the holonomy $\rho(\gamma)$. By the discussion in Section \ref{subsec:holonomy_alt}, we know that the holonomy along the peripheral curve $\gamma$ is related to the limit
\[
	\lim_{r \to +\infty} M_{r}H_{r}M_{r}^{-1} \ ,
\]
where $M_{r}$ is the holonomy along the path from $w_{0}=(0, y_{0})$ to $(r\cos(\iota), r\sin(\iota))$, and the exact element in the conjugacy class is determined by the initial conditions. Again from the theory of asymptotics of ODE with parameters, Equation (\ref{eq:newframePDE}) implies that
\[
	M_{r}=S(\diag(e^{\mu_{1}r}, e^{\mu_{2}r}, e^{\mu_{3}r}, e^{\mu_{4}r})+o(\diag(e^{\mu_{1}r}, e^{\mu_{2}r}, e^{\mu_{3}r}, e^{\mu_{4}r})))S^{-1}
\]
as $r \to +\infty$. On the other hand, we already remarked that $\lim_{r \to +\infty}H_{r}=e^{2\pi A}$, thus
\[
	M_{r}H_{r}M_{r}^{-1}=S\begin{pmatrix}
				e^{2\pi\rho_{1}}+o(1) & o(e^{(\mu_{1}-\mu_{2})r}) & o(e^{(\mu_{1}-\mu_{3})r}) & o(e^{(\mu_{1}-\mu_{4})r}) \\
				o(e^{(\mu_{2}-\mu_{1})r}) & e^{2\pi\rho_{2}}+o(1) & o(e^{(\mu_{2}-\mu_{3})r}) & o(e^{(\mu_{2}-\mu_{4})r}) \\
				o(e^{(\mu_{3}-\mu_{1})r}) & o(e^{(\mu_{3}-\mu_{2})r}) & e^{2\pi\rho_{3}}+o(1) & o(e^{(\mu_{3}-\mu_{4})r}) \\
				o(e^{(\mu_{4}-\mu_{1})r}) & o(e^{(\mu_{4}-\mu_{2})r}) & o(e^{(\mu_{4}-\mu_{3})r}) & e^{2\pi\rho_{4}}+o(1)
				\end{pmatrix}S^{-1}
\] 
and the action of the holonomy $\rho(\gamma)$ at every point $p \in \AdS_{3}\cup \partial_{\infty}\AdS_{3}$ can be computed as
\begin{equation}\label{eq:holonomy}
	\rho(\gamma)p=\lim_{r \to +\infty}B_{0}A_{0}M_{r}H_{r}M_{r}^{-1}A_{0}^{-1}B_{0}^{-1}p \ ,
\end{equation}
because the frame field of $\tilde{\sigma}$ at the base point is $B_{0}A_{0}$ by assumption. This is sufficient to conclude that the vertices of the saw-tooth found above are fixed points of the holonomy in $\partial_{\infty}\AdS_{3}$. Infact, all the paths with direction $\iota$ are homotopic relative to the base point $\omega_{0}$, hence the holonomy, and in particular the eigenvectors and the eigenvalues, does not change. On the other hand, by varying the direction $\iota \in (0,\pi)$ we discover three different eigenvalues and the corresponding eigenvectors. (The last ones are completely determined by the $\SO_{0}(2,2)$-simmetry.) Therefore, the holonomy along the peripheral curve is given by
\[
	\rho(\gamma)=B_{0}A_{0}S\diag(e^{2\pi\rho_{1}}, e^{2\pi\rho_{2}}, e^{2\pi\rho_{3}}, e^{2\pi\rho_{4}})S^{-1}A_{0}^{-1}B_{0}^{-1}
\]
Moreover, we notice that two eigenvectors always correspond to the attractive and repulsive fixed point of the holonomy and the remaining vertex can be the eigenvector for either the second biggest or the second smallest eigenvalue of $\rho(\gamma)$ (see Table \ref{table:2}). In particular, if we keep the imaginary part of the residue fixed and we change the sign of the real part a future-directed saw-tooth becomes a past-directed saw-tooth.

\medskip
\begin{table}[!htb]
\begin{center}
\begin{tabular}{l l l l l}
\hline
\textbf{Sign of $\Re(R)$} & \textbf{Sign of $\Imm(R)$} & \textbf{Eigenvalue} &\textbf{Saw-tooth}\\
\hline Positive & Positive & Second biggest & Future-directed\\
	Positive & Negative & Second smallest & Future-directed\\
	Negative & Positive & Second smallest & Past-directed\\
	Negative & Negative & Second biggest & Past-directed\\
\hline\\
\end{tabular}
\end{center}
\caption{\small{The second vertex of the limiting triangle depends on the sign of $\Re(R)\Imm(R)$. The time-orientation of the saw-tooth is determined by the sign of $\Re(R)$}}\label{table:2}
\end{table}

\section{Parameterising regular anti-de Sitter structures}\label{sec:AdS_regularGHM}
From the results of the previous sections we can construct a globally hyperbolic anti-de Sitter structure from the data of a complete hyperbolic metric $h$ of finite area on $\Sigma$ and a regular meromorphic quadratic differential $q$. Namely, Theorem \ref{cor:riassunto} provides a unique equivariant maximal embedding into $\AdS_{3}$ whose boundary at infinity is a locally achronal curve $\Gamma(h,q)$ that contains the limit set of the holonomy and is completed to a topological circle by inserting light-like segments according to the residue at the corresponding puncture. Let $\Omega(h,q)$ be the domain of dependence of this boundary curve. The holonomy representation acts properly discontinuously on $\Omega(h,q)$ (\cite{MR2443264}, \cite{MR2369412}) and the quotient is the desired GHM anti-de Sitter manifold $M(h,q)$ diffeomorphic to $\Sigma \times \R$. On the other hand, for a fixed admissible representation $\rho: \pi_{1}(\Sigma) \rightarrow \dPSL$ (i.e. the projections $\rho_{l,r}$ are both faithful and discrete), the space of GHM anti-de Sitter structures $\mathcal{GH}(\Sigma)$ on $\Sigma \times \R$ is quite large: if $\Lambda_{\rho}$ is the limit set of the action of $\rho$, then there is a one-to-one correspendence between elements of $\mathcal{GH}(\Sigma)$ and $\rho(\pi_{1}(\Sigma))$-equivariant completions of $\Lambda_{\rho}$ to an achronal topological circle (\cite{bsk_multiblack}). The aim of this section is thus to characterise the image of the map
\begin{align*}
	\Psi: \mathcal{RMQ}(\Sigma) &\rightarrow \mathcal{GH}(\Sigma)\\
		(h,q) &\mapsto M(h,q)
\end{align*}
associating to an element $(h,q)\in \mathcal{RMQ}(\Sigma)$ of the bundle of regular meromorphic quadratic differentials over $\Teich(\Sigma)$ the corresponding GHM anti-de Sitter structure. 

\begin{prop}\label{prop:injectivity}The map $\Psi$ is injective.
\end{prop}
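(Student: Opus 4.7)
The plan is to show that the data $(h,q)$ can be canonically recovered from $M(h,q)$, whence the injectivity of $\Psi$. Given $(h,q) \in \mathcal{RMQ}(\Sigma)$, Theorem \ref{cor:riassunto} produces an equivariant maximal embedding $\tilde\sigma: \tilde\Sigma \to \AdS_3$, whose image descends to an embedded maximal surface $S \subset M(h,q)$, diffeomorphic to $\Sigma$, with complete induced metric in the conformal class of $h$, second fundamental form $\Re(2q)$, and principal curvatures in $(-1,1)$. Suppose $\Psi(h,q) = \Psi(h',q')$ and let $\phi: M(h,q) \to M(h',q')$ be an isometry representing this equivalence. Lifting $\phi$ to the universal covers yields a global isometry $\tilde\phi \in \Isom(\AdS_3)$ which conjugates the two holonomy representations and sends $\Omega(h,q)$ to $\Omega(h',q')$, hence $\Gamma(h,q)$ to $\Gamma(h',q')$.

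The heart of the argument is a uniqueness statement: inside the domain of dependence of a regular GHM anti-de Sitter manifold there is at most one complete $\pi_{1}(\Sigma)$-equivariant maximal surface whose principal curvatures lie in $(-1,1)$ and whose boundary at infinity is the prescribed achronal curve. Granting this, $\tilde\phi$ must intertwine the two maximal embeddings; transport along $\tilde\phi$ then identifies the first and second fundamental forms, so the induced conformal structures on $\Sigma$ agree and the holomorphic quadratic differentials $2q$ and $2q'$ coincide. The uniformization theorem for punctured surfaces provides a unique complete hyperbolic metric of finite area in this common conformal class, forcing $h = h'$ and consequently $q = q'$.

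The main obstacle is the uniqueness of the maximal surface, which in the Cauchy-compact case is due to Bonsante--Schlenker. Given two such surfaces $S_{1}, S_{2}$ in the same regular GHM manifold, the plan is to lift them to $\AdS_{3}$ and examine a $\pi_{1}(\Sigma)$-invariant function measuring their relative position, for instance $\langle \tilde\sigma_{1}, \tilde\sigma_{2} \rangle$, which is bounded above by $-1$ since both surfaces are space-like and share the same boundary at infinity by Lemma \ref{prop:limitset}. Using the decay of the conformal factor near the punctures provided by the barriers constructed in Proposition \ref{prop:existence_sol}, I expect this function to attain its extremum on a fundamental domain of $\Sigma$. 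A Laplacian computation combined with the Gauss equation $K_{I} = -1 - \det(B)$ and the uniform bound on principal curvatures from Theorem \ref{cor:riassunto} should then force $S_{1} = S_{2}$, closing the argument.
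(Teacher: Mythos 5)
Your reduction is the same as the paper's: equality of the two structures in $\mathcal{GH}(\Sigma)$ gives maximal embeddings with the same holonomy and the same boundary curve at infinity, uniqueness of the complete maximal surface spanning that curve identifies the two embeddings, and $(h,q)$ is then recovered from the embedding data ($h$ by uniformization of the conformal class of $I$, $q$ from $II$). The paper settles the crucial uniqueness statement by invoking the argument of \cite[Lemma 4.2]{Tambu_poly}; you correctly isolate it as the main obstacle, but your proposed proof of it contains a genuine gap.

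Concretely, the assertion that $\langle\tilde\sigma_{1},\tilde\sigma_{2}\rangle\leq -1$ ``since both surfaces are space-like and share the same boundary at infinity'' is unjustified, and is in essence the statement to be proved. In $\widehat{\AdS}_{3}$ one has $\langle p,p'\rangle\leq -1$ when $p$ and $p'$ are joined by a space-like geodesic, while points joined by a time-like geodesic satisfy $\langle p,p'\rangle>-1$. If the two maximal surfaces were distinct, then in the $D\times S^{1}$ model they are graphs of different functions over some point of $D$, so the corresponding points are joined by a (short, vertical) time-like arc and the quantity you consider exceeds $-1$ there: space-likeness of each surface and a common boundary at infinity do not prevent this (the two boundary components of a convex core bound the same limit set and are time-like separated). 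Moreover, $\langle\tilde\sigma_{1},\tilde\sigma_{2}\rangle$ is only defined after choosing an equivariant identification of the two surfaces (graphs, or normal projection), and on the non-compact quotient the attainment of an extremum is exactly the delicate point: $\Sigma$ has no compact fundamental domain, so one must combine the asymptotic control near the punctures (the barriers of Proposition \ref{prop:existence_sol}, or the comparison with the horospherical surface in Section \ref{subsec:comparison}) with an Omori--Yau type maximum principle in the spirit of Proposition \ref{prop:uniqueness}, rather than expect the supremum to be attained. As written, the key uniqueness lemma is not established and the injectivity argument is incomplete; either carry out such an analysis in detail, or quote the uniqueness of the maximal surface bounding a prescribed locally achronal curve as in \cite[Lemma 4.2]{Tambu_poly}, which is what the paper does. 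The remaining steps (identification of fundamental forms, recovery of $q$, and uniqueness of the finite-area hyperbolic metric in the conformal class) are fine and agree with the paper.
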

\begin{proof}Suppose by contradiction that $\Psi$ is not injective. Then we can find $(h,q)\neq (h',q') \in \mathcal{RMQ}(\Sigma)$ such that $\Psi(h,q)=\Psi(h',q')$. By definition, this means that the equivariant maximal embeddings associated to $(h,q)$ and $(h',q')$ have the same holonomy representation and the same boundary at infinity. On the other hand, the same argument as in \cite[Lemma 4.2]{Tambu_poly} shows that given a locally achronal curve $\Gamma$ in $\partial_{\infty}\AdS_{3}$ the maximal surface bounding $\Gamma$ is unique. This gives a contradiction because the couple $(h,q)$ in uniquely determined by the embedding data of the maximal surface.
\end{proof}

\begin{prop}\label{prop:continuity}The map $\Psi$ is continuous.
\end{prop}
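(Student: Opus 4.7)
The plan is to propagate continuity through the construction step by step, starting from the quasi-linear PDE \eqref{eq:PDE3} and ending with the quotient structure. The topology on $\mathcal{RMQ}(\Sigma)$ is the smooth topology on the underlying conformal structure together with smooth convergence of the meromorphic quadratic differentials on compact subsets of $\Sigma$ and convergence of the residues at the punctures; convergence in $\mathcal{GH}(\Sigma)$ will be understood in terms of convergence of the holonomy representation together with the boundary curve $\Gamma(h,q) \subset \partial_\infty \AdS_3$, which by \cite{bsk_multiblack} determines the domain of dependence, and hence the anti-de Sitter structure.

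Assume $(h_n, q_n) \to (h,q)$ in $\mathcal{RMQ}(\Sigma)$. The first step is to show that the conformal factors $u_n$ solving \eqref{eq:PDE3} with respect to the background metrics $g_n$ (built as in \eqref{eq:metric_coord} from $h_n$ and $q_n$) converge to $u$ smoothly on compact sets and uniformly on $\Sigma$. The barrier functions $u^{\pm}$ constructed in Proposition \ref{prop:existence_sol} can be chosen uniformly in $n$ once $(h_n,q_n)$ is sufficiently close to $(h,q)$, because their defining parameters $\alpha_i, \beta_i$ depend only on $|R_i|$ and on smooth data away from the punctures. This provides a uniform $L^\infty$ bound on $u_n$. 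Interior elliptic estimates applied to \eqref{eq:PDE3} then give $C^{k,\alpha}$ bounds on every compact set, so a subsequential limit $u_\infty$ exists; since $u_\infty$ solves \eqref{eq:PDE3} with the limiting data and is bounded, Proposition \ref{prop:uniqueness} forces $u_\infty = u$, and standard arguments upgrade this to convergence of the whole sequence. This is the main analytical obstacle, because the non-compactness of $\Sigma$ forces one to control the conformal factors carefully near the punctures, where the geometry degenerates.

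Once $u_n \to u$ in $C^\infty_{loc}$, the induced metrics $I_n$ and second fundamental forms $\Re(2q_n)$ converge to those of $(h,q)$, so the frame field ODE \eqref{eq:framefield} has coefficients depending continuously on $(h,q)$. Continuous dependence of solutions to linear ODEs on coefficients gives convergence of the frame field $F_n \to F$ on compact subsets of $\tilde\Sigma$, and therefore convergence of the maximal embeddings $\tilde\sigma_n \to \tilde\sigma$ and of the holonomy representations $\rho_n \to \rho$. Convergence of $\rho_n$ implies that the limit sets $\Lambda_{\rho_n}$ converge to $\Lambda_\rho$ in the Hausdorff topology on closed subsets of $\partial_\infty \AdS_3$ (this is classical once we know from Theorem B that the factor representations $(\rho_{n,l}, \rho_{n,r})$ remain holonomies of complete hyperbolic structures of finite area).

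For the full boundary curve $\Gamma(h_n, q_n) \to \Gamma(h,q)$, one must verify that the saw-teeth inserted at the punctures vary continuously. Here the key is the asymptotic analysis in Section \ref{sec:boundary}: by \eqref{eq:embedding}, the vertices of the saw-tooth at a puncture $p_j$ are determined by the residue $R_j^{(n)}$ and by the initial condition $B_0^{(n)}$ given by the frame field at a base point, both of which depend continuously on $(h_n,q_n)$ by the previous step. Provided $\Re(R_j) \neq 0$ (and a fortiori its sign is stable under perturbation) the combinatorial type of the saw-tooth is preserved, and its vertices depend continuously on the residue; the degenerate cases where $\Re(R_j)=0$ or $R_j = 0$ are handled by noting that the saw-tooth collapses to a light-like segment or a single point, consistently with Proposition \ref{prop:holonomy}. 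Hausdorff convergence of $\Gamma(h_n, q_n) \to \Gamma(h,q)$ then implies convergence of the domains of dependence $\Omega(h_n,q_n) \to \Omega(h,q)$ (see \cite{bsk_multiblack}), and together with $\rho_n \to \rho$ this gives convergence of the quotient GHM structures $M(h_n,q_n) \to M(h,q)$, proving continuity of $\Psi$.
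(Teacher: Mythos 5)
Your analytic step is essentially the paper's: uniform barriers give $L^\infty$ control of the conformal factors, interior elliptic estimates give compact-set convergence, and boundedness plus Proposition \ref{prop:uniqueness} identifies the limit; from there the paper also gets smooth convergence of the embedding data, convergence of the equivariant maximal embeddings $\tilde{\sigma}_n\to\tilde{\sigma}$ on compact sets, and convergence of the holonomies (via Lemma \ref{lm:conv_hol}, which is interchangeable with your continuous-dependence argument for the frame-field ODE). The divergence is in how the Hausdorff convergence of the boundary curves $\Gamma(h_n,q_n)\to\Gamma(h,q)$ is obtained. The paper deduces it directly from the convergence of the surfaces themselves, using that every complete space-like surface is the graph of a $2$-Lipschitz map in the $D\times S^{1}$ model (Section \ref{sec:maxAdS}, \cite{Tambu_poly}): interior convergence together with this equicontinuity gives convergence of the graph maps up to the boundary circle, uniformly over all residue configurations, with no case analysis.

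You instead reassemble $\Gamma(h_n,q_n)$ as limit set plus saw-teeth, and both ingredients are asserted rather than proved; this is where the genuine gap lies. First, you justify Hausdorff continuity of $\Lambda_{\rho_n}$ by reading Theorem B as saying the factor representations are holonomies of complete finite-area hyperbolic structures; Theorem B and Proposition \ref{prop:holonomy} allow funnel ends, so in general $\Lambda_\rho$ is a Cantor set, and continuity of limit sets under algebraic convergence of the holonomies -- in particular when a peripheral holonomy degenerates from hyperbolic to parabolic -- is not automatic and needs an argument. Second, the continuity of the inserted saw-teeth is exactly the delicate point at the degenerate parameters, i.e.\ sequences with $\Re(R_j^{(n)})\to 0$ or $R_j^{(n)}\to 0$, where continuity of $\Psi$ must still hold because $\mathcal{RMQ}(\Sigma)$ contains all residue types. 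There the asymptotic expansion (\ref{eq:framefield_asy})--(\ref{eq:embedding}) that you invoke is not available uniformly in $n$: it presupposes $R\neq 0$, and its error terms and the eigenvalue gaps $\mu_i-\mu_j$ degenerate as $\Re(R)\to 0$, so the statement that ``the saw-tooth collapses to a light-like segment or a point'' is precisely what has to be proved, not a remark. As written, your argument establishes continuity only on the strata of $\mathcal{RMQ}(\Sigma)$ where the type and signs of the residues are locally constant; the paper's direct argument (or some uniform version of the asymptotics) is needed to cover the transitions.
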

\begin{proof}Let us first specify the topology that we consider on $\mathcal{GH}(\Sigma)$. As explained above, $\mathcal{GH}(\Sigma)$ is in one-to-one correspondende with the set of couples $(\rho, \Gamma_{\rho})$, where $\rho:\pi_{1}(\Sigma) \rightarrow \dPSL$ is an admissible representation and $\Gamma_{\rho}$ is a $\rho(\pi_{1}(\Sigma))$-equivariant completion of the limit set of $\rho$ to a locally achronal topological circle in $\partial_{\infty}\AdS_{3}$. We thus consider on $\mathcal{GH}(\Sigma)$ the topology induced by the product of the usual topology in the space of representations and the Hausdorff topology for compact sets in $\partial_{\infty}\AdS_{3}$. \\
\indent Now, let $(h_{n},q_{n})\in \mathcal{RMQ}(\Sigma)$ be a sequence converging to $(h,q) \in \mathcal{RMQ}(\Sigma)$. We need to prove that the holonomy representation of $M(h_{n},q_{n})$ converges to the holonomy representation of $M(h,q)$ and the boundary curve $\Gamma(h_{n},q_{n})$ converges to $\Gamma(h,q)$ in the Hausdorff topology. Let $v_{n}$ and $v$ be the solution to Equation (\ref{eq:PDE}) associated to the data $(h_{n},q_{n})$ and $(h,q)$, respectively. On every compact set $K\subset \Sigma$, the supersolution and the subsolution found in Proposition \ref{prop:existence_sol} provide a uniform bound for $\Delta_{h_{h}}v_{n}$. Since $h_{n}$ is a convergent sequence, standard theory for elliptic PDE gives a uniform $W^{1,2}$ bound for $v_{n}$. Thus $v_{n}$ subconverges to a weak solution of the equation
\[
	\frac{1}{2}\Delta_{h}v=e^{2v}-e^{-2v}\|q\|^{2}_{h}+\frac{1}{2}K_{h} \ ,
\]
in $W^{1,2}$ on every compact set. By elliptic regularity $v$ is smooth and the convergence is actually smooth. We deduce that the embedding data of the unique maximal surface in $M(h_{n},q_{n})$ converges smoothly on compact sets to the embedding data of the unique maximal surface in $M(h,q)$. By lifting to the universal cover, this implies that the corresponding equivariant maximal embeddings $\tilde{\sigma}_{n}:\tilde{\Sigma}\rightarrow \AdS_{3}$ are converging smoothly on compact sets (up to post-composition by a global isometry) to $\tilde{\sigma}:\tilde{\Sigma}\rightarrow \AdS_{3}$, and thus the boundary at infinity $\Gamma(h_{n},q_{n})$ converges to $\Gamma(h,q)$ in the Hausdorff topology. The convergence of the holonomy representation follows from the general result below.
\end{proof}

\begin{lemma}\label{lm:conv_hol} Let $\tilde{\sigma}_{n}:\tilde{\Sigma}\rightarrow \AdS_{3}$ be a sequence of $\rho_{n}$-equivariant space-like embeddings. If $\tilde{\sigma}_{n}$ converges to a space-like embedding $\tilde{\sigma}$ smoothly on compact sets, then $\rho_{n}$ converges, up to subsequences, to a representation $\rho$ and $\tilde{\sigma}$ is $\rho$-equivariant.
\end{lemma}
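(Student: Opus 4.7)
The plan is to exploit the fact that an orientation- and time-orientation-preserving isometry of $\AdS_3$ is uniquely determined by its action on a single point together with an oriented orthonormal frame at that point. Since $\tilde{\sigma}_n$ is a spacelike embedding, the induced tangent plane plus the future-pointing unit normal produces such a frame canonically at every point of $\tilde{\sigma}_n(\tilde{\Sigma})$, and smooth convergence on compact sets of $\tilde{\sigma}_n \to \tilde{\sigma}$ immediately yields convergence of these frames.

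First, I would fix a finite symmetric generating set $\{\gamma_1,\dots,\gamma_m\}$ of $\pi_1(\Sigma)$, a base point $x_0 \in \tilde{\Sigma}$, and a compact set $K \subset \tilde{\Sigma}$ containing $x_0, \gamma_1\cdot x_0,\dots,\gamma_m\cdot x_0$. Let $\mathcal{F}_n(x)=(\tilde{\sigma}_n(x), e_1^n(x), e_2^n(x), N_n(x))$ denote the oriented orthonormal frame obtained from $\tilde{\sigma}_n$ at $x$, where $(e_1^n,e_2^n)$ is an orthonormal frame of $d\tilde{\sigma}_n(T_x\tilde{\Sigma})$ (e.g.\ obtained by Gram--Schmidt from a fixed pair of tangent vectors at $x$) and $N_n$ is the future-pointing unit normal. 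Smooth convergence on compact sets guarantees that $\mathcal{F}_n(x) \to \mathcal{F}(x)$ uniformly on $K$, where $\mathcal{F}$ is the analogous frame for $\tilde{\sigma}$.

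Next, for each generator $\gamma_i$, the equivariance relation $\tilde{\sigma}_n \circ \gamma_i = \rho_n(\gamma_i)\circ \tilde{\sigma}_n$ implies (by differentiating) that $\rho_n(\gamma_i)$ sends the frame $\mathcal{F}_n(x_0)$ to the frame $\mathcal{F}_n(\gamma_i\cdot x_0)$. Since both frames converge and frames determine the isometry, $\rho_n(\gamma_i)$ converges in $\Isom(\AdS_3)$ to the unique element $\rho(\gamma_i)$ carrying $\mathcal{F}(x_0)$ to $\mathcal{F}(\gamma_i\cdot x_0)$. Using the finite generating set, a diagonal argument (or simply noting that convergence on generators suffices) yields a representation $\rho$ with $\rho_n \to \rho$ pointwise, and the relations in $\pi_1(\Sigma)$ pass to the limit so that $\rho$ is indeed a homomorphism.

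Finally, for equivariance of $\tilde{\sigma}$: for any $\gamma \in \pi_1(\Sigma)$ and any $x \in \tilde{\Sigma}$, choose a compact set containing $x$ and $\gamma\cdot x$; then both sides of $\tilde{\sigma}_n(\gamma\cdot x)=\rho_n(\gamma)\tilde{\sigma}_n(x)$ converge, giving $\tilde{\sigma}(\gamma\cdot x)=\rho(\gamma)\tilde{\sigma}(x)$. I do not anticipate a real obstacle here; the only point requiring care is the choice of a canonical frame (so that smooth convergence of embeddings upgrades to smooth convergence of frames), which is why the spacelike hypothesis and a choice of time-orientation are used to produce the normal vector unambiguously.
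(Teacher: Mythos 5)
Your proposal is correct and follows essentially the same route as the paper: both arguments fix a finite generating set and a compact set containing the translates of a base point, use smooth convergence of the embeddings to get convergence of the (space-like, hence non-degenerate) frames at the base point and at its translates, deduce convergence of the holonomy on generators, and obtain equivariance of $\tilde{\sigma}$ by passing to the limit in $\tilde{\sigma}_n(\gamma\cdot x)=\rho_n(\gamma)\tilde{\sigma}_n(x)$. The only cosmetic difference is that you identify the limit of $\rho_n(\gamma_i)$ directly as the isometry carrying the limiting frames, which even avoids the extraction of subsequences that the paper performs via compactness in $\SO_{0}(2,2)$.
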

\begin{proof}Fix a base point $p \in \tilde{\Sigma}$. Let $\{\gamma_{j}\}_{j=1}^{m}$ be a finite generating set of $\pi_{1}(\Sigma,p)$. Let $K\subset \tilde{\Sigma}$ be a compact set such that $\{\gamma_{j}\cdot p\}_{j=1}^{m} \subset K$. The holonomy representations $\rho_{n}:\pi_{1}(\Sigma)\rightarrow \Isom(\AdS_{3})$ are completely determined by the frame field $\tilde{F}_{n}$ of $\tilde{\sigma}_{n}$ at the base point $p$ and the  collection of matrices $\{M_{n,\gamma_{i}}\}_{i=1}^{m}$ sending $\tilde{F}_{n}(\tilde{\sigma}_{n}(p))$ to $\tilde{F}_{n}(\tilde{\sigma}_{n}(\gamma_{i}\cdot p))$. Since $\tilde{\sigma}_{n}$ converges smoothly on compact sets, we have that $\tilde{F}_{n}(\tilde{\sigma}_{n}(p))$ converges to $\tilde{F}(\tilde{\sigma}(p))$ and $\tilde{F}_{n}(\tilde{\sigma}_{n}(\gamma_{i}\cdot p))$ converges to $\tilde{F}(\tilde{\sigma}(\gamma_{i}\cdot p))$ for every $i=1, \dots, m$, where $\tilde{F}$ is the frame field of the embedding $\tilde{\sigma}$. Since $\tilde{\sigma}$ is space-like, the sequences $M_{n,\gamma_{i}}$ are contained in a compact set of $\SO_{0}(2,2)$ for every $i$, hence they converge to some $M_{\gamma_{i}}$ up to subsequences. Together with the previous remark, this shows that $\rho_{n}$ subconverges to a representation $\rho$. Taking then the limit of the expression
\[
	\tilde{\sigma}_{n}(\gamma \cdot p)=\rho_{n}(\gamma)\tilde{\sigma}_{n}(p) \ ,
\]
we conclude that $\tilde{\sigma}$ is $\rho$-equivariant.
\end{proof}

We define the subset of \emph{regular} GHM anti-de Sitter structures on $\Sigma\times \R$ as the image of the map $\Psi$:
\[
	\mathcal{GH}^{reg}(\Sigma)=\Psi(\mathcal{RMQ}(\Sigma))\subset \mathcal{GH}(\Sigma) \ .
\]
From Section \ref{sec:boundary} we know that the curves $\Gamma(h,q)$ are always obtained by completing the limit set of the holonomy representation with light-like segments in a precise way. However, we do not know if any admissible representation $\rho:\pi_{1}(\Sigma) \rightarrow \dPSL$ is attained in the image. To this aim, we construct a bijection between $\mathcal{GH}^{reg}(\Sigma)$ and another parameter space. Let $\mathcal{F}(\Sigma)$ denote the Fricke space of $\Sigma$. We define $\widehat{\mathcal{DF}(\Sigma)}$ as the set of $(k+2)$-uples (recall that $k$ is the number of punctures of $\Sigma$) of the form $(h_{l},h_{r}, \epsilon_{1}, \dots, \epsilon_{k})$, where $h_{l},h_{r} \in \mathcal{F}(\Sigma)$ and $\epsilon_{j}$ is a decoration on each puncture so that
\[
	\epsilon_{j}=\begin{cases}
		\pm 1 \ \ \ \text{if the puncture $p_{j}$ is a geodesic boundary for both $h_{l}$ and $h_{r}$} \\
	0 \ \ \ \ \ \text{otherwise}
		\end{cases} \ .
\]
Notice that each $(k+2)$-uple uniquely determines a GHM anti-de Sitter structure on $\Sigma \times \R$ in the following way. Let $\rho_{l,r}:\pi_{1}(\Sigma)\rightarrow \PSL(2,\R)$ be the holonomy representation of $h_{l,r}$. By definition, the representation $\rho=(\rho_{l},\rho_{r})$ is admissible. Let $\Lambda_{\rho}$ be its limit set in $\partial_{\infty}\AdS_{3}$. The decoration $\epsilon_{j}$ gives a unique way to complete $\Lambda_{\rho}$ to a topological circle: for every puncture $p_{j}$ corresponding to a geodesic boundary, we add a future-directed (resp. past-directed) saw-tooth if $\epsilon_{j}=1$ (resp. $\epsilon_{j}=-1$) and then connect (if necessary) all remaining causally related points with light-like segments. The quotient of the domain of dependence of the resulting curve by the action of $\rho$ gives a regular GHM anti-de Sitter structure on $\Sigma\times \R$. We endow $\widehat{\mathcal{DF}(\Sigma)}$ with the topology induced by this bijection.

\begin{teo}\label{thm:hol_para}There is a bijection between $\mathcal{GH}^{reg}(\Sigma)$ and $\widehat{\mathcal{DF}(\Sigma)}$.
\end{teo}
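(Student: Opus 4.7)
The plan is to define a map $\Phi: \mathcal{GH}^{reg}(\Sigma) \to \widehat{\mathcal{DF}(\Sigma)}$ from the intrinsic geometry of a regular GHM structure, verify well-definedness and injectivity directly, and then tackle surjectivity by invariance of domain combined with a properness argument.

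Given $M \in \mathcal{GH}^{reg}(\Sigma)$, Proposition \ref{prop:injectivity} allows us to write $M = \Psi(h,q)$ for a uniquely determined $(h,q) \in \mathcal{RMQ}(\Sigma)$. I would set $\Phi(M) = (h_{l}, h_{r}, \epsilon_{1}, \ldots, \epsilon_{k})$, where $h_{l}, h_{r}$ are the Fricke structures produced by Theorem B, and $\epsilon_{j} = +1$ (resp.\ $-1$) when the description of $\Gamma(h,q)$ from Section \ref{sec:boundary} inserts a future-directed (resp.\ past-directed) saw-tooth at $p_{j}$, and $\epsilon_{j} = 0$ otherwise. By Proposition \ref{prop:holonomy}, both $\rho_{l}(\gamma_{j})$ and $\rho_{r}(\gamma_{j})$ are hyperbolic precisely when $\Re(R_{j}) \neq 0$, which is exactly the case where a saw-tooth is inserted. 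Hence $\epsilon_{j} = \pm 1$ if and only if both $h_{l}, h_{r}$ have geodesic boundary at $p_{j}$, so $\Phi$ indeed lands in $\widehat{\mathcal{DF}(\Sigma)}$.

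For injectivity, suppose $\Phi(M) = \Phi(M')$. Then $M$ and $M'$ share the holonomy $\rho = (\rho_{l}, \rho_{r})$ and hence the limit set $\Lambda_{\rho}$. The decoration $\epsilon_{*}$ prescribes unambiguously how to complete $\Lambda_{\rho}$ to an achronal topological circle in $\partial_{\infty}\AdS_{3}$: at a puncture with both ends geodesic one inserts the appropriately time-oriented saw-tooth; at a mixed cusp/geodesic puncture Lemma \ref{lm:boundary} forces a single light-like segment joining the two causally related limit points; at a double-cusp puncture no insertion is needed. The resulting curve $\Gamma$ determines $\Omega(\Gamma)$ and hence the isometry class of the quotient, so $M = M'$.

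Surjectivity is the main obstacle, and I would tackle it via invariance of domain applied to the composition $\Phi \circ \Psi: \mathcal{RMQ}(\Sigma) \to \widehat{\mathcal{DF}(\Sigma)}$. This map is continuous (Proposition \ref{prop:continuity}, together with the observation that the $\epsilon$-decoration is locally constant on each open stratum determined by the signs of $\Re(R_{j})$ and $\Imm(R_{j})$) and injective (combining Proposition \ref{prop:injectivity} with the injectivity of $\Phi$ above). Both spaces admit compatible stratifications according to residue type at the punctures, and on each stratum one checks, via the Riemann--Roch count for $\mathcal{RMQ}(\Sigma)$ and the pants decomposition count for $\mathcal{F}(\Sigma) \times \mathcal{F}(\Sigma)$, that the two sides are smooth manifolds of the same real dimension. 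Invariance of domain then yields that $\Phi \circ \Psi$ is open on each stratum.

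The hardest remaining step is a properness statement: if $(h_{n}, q_{n})$ is a sequence in a given stratum of $\mathcal{RMQ}(\Sigma)$ whose images converge in $\widehat{\mathcal{DF}(\Sigma)}$, then $(h_{n}, q_{n})$ subconverges in that stratum. The residues $R_{j}^{n}$ are controlled directly by Proposition \ref{prop:holonomy} through the peripheral translation lengths, so the crux is controlling the conformal structure $h_{n}$; I would handle this via the harmonic maps interpretation of Section \ref{sec:relpara} together with standard compactness results for harmonic maps into hyperbolic surfaces with cusps or geodesic boundary. Combining openness with the resulting closedness yields surjectivity stratum-by-stratum, completing the proof.
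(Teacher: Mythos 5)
Your proposal follows essentially the same architecture as the paper's proof: stratify both sides by the decoration/residue type at the punctures, check that the corresponding subbundle of $\mathcal{RMQ}(\Sigma)$ and the corresponding product of Teichm\"uller spaces are manifolds of equal dimension, use continuity (Proposition \ref{prop:continuity}) and injectivity (Proposition \ref{prop:injectivity} plus the fact that, once the decoration is fixed, the holonomy determines the completed boundary curve) together with invariance of domain to get openness, and reduce surjectivity to properness of $\hol\circ\Psi$ on each stratum. One small bookkeeping point: the strata with $\epsilon_{j}=0$ must be further split according to which of the two factors carries the extra geodesic boundary (the paper's choice of the subset $S_{j}$ and the union of the two products, corresponding to $\Imm(R_{j})>0$ versus $\Imm(R_{j})<0$); your stratification by signs of $\Re(R_{j})$ and $\Imm(R_{j})$ does encode this, so this is only a matter of stating it explicitly.

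The one genuine divergence, and the weak point of your proposal, is the properness step. The paper proves it by noting that, the decoration being fixed on the stratum, convergence of the holonomies forces Hausdorff convergence of the completed boundary curves $\Gamma(h_{n},q_{n})$, and then invokes the compactness theory of \cite[Section 4.1]{Tambu_poly} for maximal surfaces spanning convergent locally achronal curves, which yields smooth subconvergence of the embedding data $I_{n}=2e^{2v_{n}}h_{n}$, $II_{n}=2\Re(q_{n})$ and hence of $(h_{n},q_{n})$ in the stratum. Your substitute, ``standard compactness results for harmonic maps into hyperbolic surfaces with cusps or geodesic boundary,'' is problematic here: because the Hopf differentials have second-order poles, the relevant harmonic maps have infinite energy, so the usual energy-based compactness does not apply directly and one would need the infinite-energy theory of \cite{Mike_infenergy} adapted to families; moreover, the existence and structure of the associated minimal Lagrangian maps between hyperbolic surfaces with geodesic boundary is obtained in this paper as a \emph{consequence} of Theorem \ref{thm:hol_para} (Theorem E), so leaning on that picture risks circularity. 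If you replace this step by the boundary-curve argument (or supply a genuine infinite-energy compactness statement), the rest of your proof matches the paper's.
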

\begin{proof}Fix a decoration $(\epsilon_{1}, \dots, \epsilon_{k})$ and suppose that $0\leq m\leq k$ elements are non-zero and the other $k-m$ vanish. For every choice of a subset $S_{j}$ of $0\leq j\leq k-m$ punctures with vanishing decoration, the set of couples of hyperbolic metrics compatible with the decoration and the choice of $S_{j}$ is parameterised by 
\[
	\left(\Teich(\overline{\Sigma}_{m+j,k-m-j})\times \Teich(\overline{\Sigma}_{m,k-m})\right) \bigcup  
	\left(\Teich(\overline{\Sigma}_{m,k-m})\times \Teich(\overline{\Sigma}_{m+j,k-m-j})\right)
\]
where $\Teich(\overline{\Sigma}_{a,b})$ denotes the Teich\"muller space of hyperbolic metrics with $a$ geodesic boundary components and $b$ cusps. We construct a bijection between each piece of the above union and a subbundle of $\mathcal{RMQ}(\Sigma)$. We will explain the details for $\Teich(\overline{\Sigma}_{m+j,k-m-j})\times \Teich(\overline{\Sigma}_{m,k-m})$ and we then indicate what needs to be changed for the other case. We consider the subbundle $X_{j}$ of $\mathcal{RMQ}(\Sigma)$ whose fibre $F_{j}$ over $h \in \Teich(\Sigma)$ consists of regular meromorphic quadratic differentials satisfying the following rules:
\begin{enumerate}[i)]
	\item if $\epsilon_{i}=+1$, then the real part of the residue $R_{i}$ at the puncture $p_{i}$ is positive;
	\item if $\epsilon_{i}=-1$, then the real part of the residue $R_{i}$ at the puncture $p_{i}$ is negative;
	\item if $\epsilon_{i}=0$ and $p_{i} \in S_{j}$, then $\Re(R_{i})=0$ and $\Ima(R_{i})>0$;
	\item if $\epsilon_{i}=0$ and $p_{i} \notin S_{j}$, then $R_{i}=0$.
\end{enumerate}
Notice that both $X_{j}$ and $\Teich(\overline{\Sigma}_{m+j,k-m-j})\times \Teich(\overline{\Sigma}_{m,k-m})$ are manifolds of the same dimension:
\begin{align*}
	\dim X_{j}&=\dim(\Teich(\Sigma))+\dim(F_{j})\\
		  &=(3|\chi(\overline{\Sigma})|+2k)+(3|\chi(\overline{\Sigma})|+2k+2m+j)\\
		  &=6|\chi(\overline{\Sigma})|+4k+2m+j \\
	          &=\dim(\Teich(\overline{\Sigma}_{m+j,k-m-j})\times \Teich(\overline{\Sigma}_{m,k-m})) \ .
\end{align*}		      
From Proposition \ref{prop:continuity} and Proposition \ref{prop:injectivity}, if suffices to prove that the natural map
\begin{align*}
	\hol \circ \Psi:X_{j} &\rightarrow \Teich(\overline{\Sigma}_{m+j,k-m-j})\times \Teich(\overline{\Sigma}_{m,k-m})\\
		(h,q) &\mapsto \hol(\Psi(h,q))
\end{align*}
is proper. Let $(h_{n},q_{n}) \in X_{j}$ be a sequence such that $\hol(\Psi(h_{n},q_{n}))$ converges. Since the way of completing the limit set of the holonomy is determined by the decoration, this implies that the boundary at infinity of the maximal surfaces with embedding data $I_{n}=2e^{2v_{n}}h_{n}$ and $II_{n}=2\Re(q_{n})$ are converging in the Hausdorff topology. The techniques introduced in \cite[Section 4.1]{Tambu_poly} show that the maximal surfaces bounding such curves are actually converging smoothly on compact sets. Hence their embedding data converge and $\Psi$ is proper. \\
\indent For the other case, it is sufficient to require $\Ima(R_{i})<0$ in $(iii)$ by Proposition \ref{prop:holonomy}. Since $\widehat{\mathcal{DF}(\Sigma)}$ is the disjoint union over all possible decorations and choices of the subset $S_{j}$ of all these submanifolds, $\Psi$ is surjective.
\end{proof}
  
\section{Application to minimal Lagrangian maps}\label{sec:application}
Let $\Omega_{r}, \Omega_{l} \subset \h^{2}$ be open domains of the hyperbolic plane. An orientation preserving diffeomorphism $m:\Omega_{l}\rightarrow \Omega_{r}$ is minimal Lagrangian if its graph is a minimal surface in $\h^{2}\times \h^{2}$ that is Lagrangian for the symplectic form $\omega_{\h^{2}}\oplus -\omega_{\h^{2}}$. \\

Minimal Lagrangian maps have been extensively studied when $\Omega_{r}=\Omega_{l}=\h^{2}$. For instance, if we ask $m$ to be equivariant under the action of two Fuchsian representations $\rho_{l},\rho_{r}:\pi_{1}(\overline{\Sigma}) \rightarrow \PSL(2,\R)$, a result by Schoen (\cite{Schoenharmonic}) states that such $m$ always exists and is unique in each isotopy class. Later, Bonsante and Schlenker (\cite{bon_schl}) used anti-de Sitter geometry to construct minimal Lagrangian maps from $\h^{2}$ to $\h^{2}$ with given boundary conditions. More precisely, they proved that every quasi-symmetric homeomorphism of the circle is realised on the boundary of a unique minimal Lagrangian diffeomorphism of the hyperbolic plane. Here we use the techniques introduced by Bonsante and Schlenker in order to construct a class of minimal Lagrangian maps between hyperbolic surfaces with cusps and geodesic boundary. \\

Let $\Sigma$ still denote a surface with $k$ punctures and negative Euler characteristic. Let $h$ and $h'$ be hyperbolic structures on $\Sigma$ so that each puncture corresponds to a cusp or a geodesic boundary, and denote with $\rho$ and $\rho'$ the corresponding holonomy representations. Let $n$ be the common number of geodesic boundary components. By Theorem \ref{thm:hol_para}, we can find $2^{n}$ regular GHM anti-de Sitter manifolds with holonomy $(\rho, \rho')$. We are going to show that the maximal surface embedded into each of these manifolds corresponds to a minimal Lagrangian diffeomorphism from $(\Sigma, h)$ to $(\Sigma, h')$ with a precise behaviour on the boundaries, thus proving Theorem E. \\

Let us first recall the relation between equivariant maximal surfaces in anti-de Sitter space and minimal Lagrangian maps between hyperbolic surfaces. Let $\tilde{S}$ be a $(\rho,\rho')$-equivariant maximal surface in $\AdS_{3}$ with second fundamental form $II=2\Re(q)$. The Gauss map 
\[
	G: \tilde{S} \rightarrow \h^{2}\times \h^{2}
\]
is harmonic for the conformal structure of the induced metric on $\tilde{S}$ and $(\rho, \rho')$-equivariant. Hence the two projections $G_{l}=\pi_{l}\circ G$ and $G_{r}=\pi_{r}\circ G$ are also harmonic. The bound on the principal curvature given in Theorem \ref{cor:riassunto} guarantees that these maps are local diffeomorphisms (\cite{bon_schl}). They are also injective due to the following:

\begin{lemma}Let $S$ be a maximal surface in $\AdS_{3}$ with principal curvatures in $(-1,1)$. Then the left and right Gauss maps $G_{l,r}$ are injective on $S$.
\end{lemma}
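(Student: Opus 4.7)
The plan is to prove injectivity by showing that $G_l$ is a covering map onto $\h^{2}$: since the target is simply connected, $G_l$ will then have to be a homeomorphism. The argument for $G_r$ is entirely symmetric, with $E-JB$ in place of $E+JB$.

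First I would pull the hyperbolic metric back via $G_l$. As already observed in the paragraph preceding the lemma, the bound $|\lambda|<1$ on the principal curvatures makes $E+JB$ invertible: in an orthonormal $I$-frame diagonalising $B$ as $\diag(\lambda,-\lambda)$, one computes $\det(E+JB)=1-\lambda^{2}>0$. Hence $G_l$ is a local diffeomorphism and $h_l:=G_l^{*}g_{\h^{2}}$ is a genuine Riemannian metric on $S$; being the pullback of a hyperbolic metric by a local diffeomorphism, $h_l$ has constant curvature $-1$.

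The crux is the completeness of $(S,h_l)$. A direct computation of the symmetric endomorphism $(E+JB)^{T}(E+JB)$ in the above frame gives eigenvalues $(1\pm\lambda)^{2}$, hence the pointwise comparison $h_l\geq (1-|\lambda|)^{2}I$. When $|\lambda|$ is uniformly bounded away from $1$, this yields $h_l\geq c\,I$ for some $c>0$, and completeness of $I$ transfers to completeness of $h_l$. In the situations where $|\lambda|\to 1$ at some ends (the punctures with non-vanishing residue analysed in Theorem~\ref{cor:riassunto}), one can instead use equivariance: $h_l$ descends to a metric on the quotient surface that, in view of Proposition~\ref{prop:holonomy}, has only cusp or geodesic-boundary ends, both of which are complete.

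Once $(S,h_l)$ is known to be complete, I invoke the classical fact that a local isometry from a complete Riemannian manifold to a simply connected one (here $\h^{2}$) is a Riemannian covering, and any covering of a simply connected space is a homeomorphism. This gives the desired injectivity of $G_l$. I expect the completeness step to be the main technical obstacle, because the naive comparison $h_l\geq c\,I$ degenerates precisely in the regime where the principal curvatures approach $\pm 1$; circumventing it requires the equivariance reduction outlined above rather than a purely local estimate.
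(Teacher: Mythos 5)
There is a genuine gap, and it sits exactly at the step you flag as the ``main technical obstacle'': the completeness of $h_{l}=G_{l}^{*}g_{\h^{2}}$ fails in precisely the cases the lemma is needed for, and your proposed fix is incorrect. When a puncture has residue with $\Re(R)\neq 0$, the principal curvatures tend to $\pm 1$ along that end (Theorem \ref{cor:riassunto}), and the quotient metric $h_{l}$ has an end whose peripheral curves have lengths converging to the finite value $4\pi\sqrt{|R|+\Imm(R)}$ (Section \ref{sec:holonomy}); this is an end that closes up at a boundary geodesic at \emph{finite} distance, i.e.\ an incomplete metric, not a complete funnel or cusp end. Consistently with this, $G_{l}$ is not onto $\h^{2}$: the last part of Section \ref{sec:application} shows that along the light-like segments of $\partial_{\infty}\tilde{S}$ the left and right Gauss maps limit onto points and geodesics of $\h^{2}$, so the image is a proper subdomain and the paper only ever claims that $G_{l,r}$ are diffeomorphisms \emph{onto their image}. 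Hence the covering-map strategy (``local isometry from a complete surface to the simply connected $\h^{2}$'') cannot be repaired by the equivariance reduction you outline, because the descended metric is simply not complete; and note also that the lemma is stated for an arbitrary maximal surface with principal curvatures in $(-1,1)$, with no equivariance hypothesis, so a quotient argument is outside its scope in any case. Your preliminary computation (eigenvalues $(1\pm\lambda)^{2}$ of $(E+JB)^{T}(E+JB)$, so $G_{l}$ is a local diffeomorphism) is correct and matches the paragraph preceding the lemma, but it only gives local injectivity.

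The paper's argument is of a different nature and avoids completeness altogether: it pushes $S$ along the normal flow to the equidistant surface $S_{-\pi/4}$, which is smooth because the principal curvatures lie in $(-1,1)$, is future-convex of constant curvature $-2$, and has the \emph{same} Gauss maps as $S$; injectivity is then proved for future-convex space-like surfaces by a support-plane argument (given $p\neq p'$, the surface lies in the intersection of the future half-spaces bounded by the space-like tangent planes at $p$ and $p'$, and the Gauss map of the boundary of that intersection --- a totally geodesic or once-pleated surface --- is injective). If you want to salvage your approach, you would have to replace ``covering onto $\h^{2}$'' by an argument identifying the image domain and proving properness of $G_{l}$ onto it, which essentially amounts to redoing the boundary analysis; the convexity argument is both shorter and strictly more general.
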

\begin{proof} Let $S_{r}$ be the surface obtained by pushing $S$ along the normal direction for a time $r\in \R$. The shape operator of the surface $S_{r}$ is given by
\[
	B_{r}=(\cos(r)E+\sin(r)B)^{-1}(-\sin(r)E+\cos(r)B) \ ,
\] 
hence the surface $S_{r}$ is smooth for every $r \in [-\pi/4, +\pi/4]$. Moreover, $S_{-\pi/4}$ is future-convex with constant curvature $-2$. Since $S_{-\pi/4}$ is equidistant to $S$, they have the same Gauss map. It is thus sufficient to prove that if $S'$ is a future-convex space-like surface in $\AdS_{3}$, then $G_{r}$ and $G_{l}$ are injective. Let $p, p' \in S'$. By assumption, the totally geodesic planes $T_{p}S'$ and $T_{p'}S'$ tangent to $S'$ at these points are space-like and $S'$ is contained in the intersection $\Omega$ of the future half-spaces bounded by $T_{p}S'$ and $T_{p'}S'$. The boundary $\partial \Omega$ is either a totally geodesic plane or a pleated surface with pleating locus made by a single geodesic. Since $\partial \Omega$ is tangent to $S'$ at $p$ and $p'$, the Gauss map of $S'$ coincides with the Gauss map of $\partial \Omega$ at those points. It is easy to verify that $G_{r}$ and $G_{l}$ are injective on $\partial \Omega$, thus we can conclude that $G_{r}(p)\neq G_{r}(p')$ and $G_{l}(p)\neq G_{l}(p')$.
\end{proof}

We deduce that $G_{r}$ and $G_{l}$ are diffeomorphisms onto their image, and the pull-back metrics $G_{l}^{*}g_{\h^{2}}$ and $G_{r}^{*}g_{\h^{2}}$ coincide with the lifts of $h$ and $h'$, respectively. Moreover, a direct computation shows that $G_{l}$ and $G_{r}$ have opposite Hopf differentials $\pm 2iq$ (see for instance \cite[Prop. 6.3]{Tambu_Qiyu}). Therefore, the composition
\[
	\tilde{m}=G_{r} \circ G_{l}^{-1}
\]
induces a minimal Lagrangian map $m:(\Sigma, h) \rightarrow (\Sigma, h')$. In particular, the harmonic maps into which $m$ factors are the harmonic diffeomorphisms from a Riemann surface with punctures to a hyperbolic surface with geodesic boundary or cusps, whose Hopf differential is meromorphic with poles of order at most $2$ at the punctures, as studied in \cite{Mike_infenergy}. \\

We want now to describe the behaviour of these minimal Lagrangian maps in a collar neighbourhood of a geodesic boundary of $(\Sigma, h)$. To this aim, it is sufficient to study the harmonic maps $G_{l}$ and $G_{r}$ in a neighbourhood of the corresponding puncture. Passing to the universal cover, this means that we need to determine the behaviour of the left and right Gauss maps along sequences that converge to a point on the boundary at infinity of the equivariant maximal surface $\tilde{S}$ lying on a light-like segment. From Equation (\ref{eq:framefield_asy}), $\tilde{S}$ is asymptotic to an isometric copy of the model horospherical surface in a neighbourhood of the puncture, thus $G_{l}$ and $G_{r}$ can be approximated by the Gauss map of the horospherical surface, which has been studied in \cite[Section 5]{Tambu_poly}. In order to recall that result, let us first introduce some notation. Identify $\h^{2}$ with a totally geodesic space-like plane $P_{0}$ in $\AdS_{3}$. Following the left and right ruling of $\partial_{\infty}\AdS_{3}$, we can define two projections $\pi_{r,l}:\partial_{\infty}\tilde{S} \rightarrow \partial_{\infty}P_{0}$ by sending $\xi \in \partial_{\infty}\tilde{S}$ to the unique intersection $\pi_{r,l}(\xi)$ between the line belonging to the left or right foliation passing through $\xi$ and $\partial_{\infty}P_{0}$. In particular, a light-like segment $e_{l}$ belonging to the left foliation is mapped to a point by $\pi_{l}$ and to a segment by $\pi_{r}$. From the computations of \cite{Tambu_poly} we deduce that any sequence of points approaching a light-like segment $e_{l}$ belonging to the left-foliation gets sent by the left Gauss map to a sequence of points in $\h^{2}$ limiting to $\pi_{l}(e_{l})$, and by the right Gauss map to sequences of points approaching the geodesic in $\h^{2}$ with points at infinity $\partial\pi_{r}(e_{l})$. In particular, these minimal Lagrangian maps cannot be extended to the geodesic boundaries.

\bibliographystyle{alpha}
\bibliographystyle{ieeetr}
\bibliography{bs-bibliography}

\bigskip


\end{document}